\newtheorem{theorem}{Theorem}[section]
\newtheorem{corollary}[theorem]{Corollary}
\newtheorem{lemma}[theorem]{Lemma}
\newtheorem{observation}[theorem]{Observation}
\newtheorem{proposition}[theorem]{Proposition}
\DeclareMathOperator {\ex} {{\rm ex}}
\DeclareMathOperator {\diam} {{\rm diam}}
\title{Breaking Symmetry in Graphs by Resolving Sets}
\author{Meysam Korivand $^{a}$ \and Nasrin Soltankhah $^{a,}$\thanks{Corresponding author} \and Sandi Klav\v{z}ar $^{b,c,d}$ \\\\
$^{a}$\small Department of Mathematics, Faculty of Mathematical Sciences, \\
\small Alzahra University, Tehran, Iran \\
\small {\tt mekorivand@gmail.com, m.korivand@alzahra.ac.ir} \\
\small {\tt  soltan@alzahra.ac.ir} \\
$^{b}$\small Faculty of Mathematics and Physics, University of Ljubljana, Slovenia\\
\small {\tt sandi.klavzar@fmf.uni-lj.si} \\
$^{c}$\small Institute of Mathematics, Physics and Mechanics, Ljubljana, Slovenia \\
$^{d}$\small Faculty of Natural Sciences and Mathematics, University of Maribor, Slovenia \\
}
\date{\today}
\begin{document}

\maketitle

\begin{abstract} 
Let ${\rm dim}(G)$ and $D(G)$ respectively denote the metric dimension and the distinguishing number of a graph $G$. It is proved that $D(G) \le {\rm dim}(G)+1$ holds for every connected graph $G$. Among trees, exactly paths and stars attain the bound, and among connected unicyclic graphs such graphs are $t$-cycles for $t\in \{3,4,5\}$. It is shown that for any $1\leq n< m$, there exists a graph $G$ with $D(G)=n$ and ${\rm dim}(G)=m$. Using the bound $D(G) \le {\rm dim}(G)+1$, graphs with $D(G) = n(G)-2$ are classified. 
\end{abstract}

\noindent
\textbf{Keywords:} resolving set; metric dimension; distinguishing number; twin graph; almost asymmetric graph

\medskip\noindent
\textbf{AMS Math.\ Subj.\ Class.\ (2020)}: 05C12, 05C15

\section{Introduction} 
\label{sec:intro}

Let $G = (V(G), E(G))$ be a graph and $S \subseteq V(G)$. Then $S$ is a {\it resolving set} if for any two vertices $g_1$  and $g_2$ of $G$, there exists a vertex $s \in S$ such that $d_G(g_1, s)\neq d_G(g_2, s)$, where $d_G$ is the standard shortest-path distance. The smallest size resolving set of $G$ is a {\em metric basis} of $G$, its size being the {\it metric dimension} (MD) ${\rm dim}(G)$ of $G$. 

Resolving sets were introduced independently and simultaneously by Slater \cite{slater} and Harary \& Melter \cite{harary} in 1975-6, and are one of the most widely studied and applied graph theory concepts. The 2023 survey~\cite{survey2} which presents an overview of the essential results
and applications of metric dimension contains 220 references, while the survey~\cite{survey1} that focuses on variants of metric dimension cites 203 papers. Among these variants of metric dimension we point to strong MD~\cite{b.1}, $k$-MD~\cite{c.4}, $k$-metric antidimension~\cite{2a}, local MD~\cite{b.2}, adjacency MD~\cite{c.5}, non-local MD~\cite{Klavzar0}, edge MD~\cite{c.7}, fractional MD~\cite{c.9}, and mixed MD~\cite{c.8}. It is also extremely important that various applications of metric dimension have been found so far to solve real-world problems such as privacy in social networks, error correcting codes, locating intruders in networks, chemistry, robot navigation, pattern recognition, image processing, and coin weighing, cf.~\cite{survey2}.

In 1977, Babai~\cite{babai} presented a concept whose goal is to break symmetry in graphs by a vertex partition of graphs with smallest size. Two decades later, in 1996, the concepts was reinvented by Albertson and Collins~\cite{alber} which led to a widespread attention of it. In fact, it was only a decade or so ago from now that it was noticed that the origin of the concept was Babai's paper; it thus went unnoticed for almost 40 years. Anyhow, a {\em distinguishing coloring} of a graph $G$ is a vertex coloring such that there is no color preserving non-trivial automorphism of $G$. The minimum number of colors required for a distinguishing coloring of $G$ is the {\it distinguishing number} $D(G)$ of $G$.

Distinguishing coloring of graphs have been investigated from many perspectives and with many different techniques, the papers~\cite{Ahmadi,  Collins2, Imrich, Russell} form a short selection of such efforts. Distinguishing coloring can be extended to arbitrary groups acting on sets, see~\cite{Babai, Bailey, Chan, Klavzar, Tymoczko}. Moreover, the concept has been the inspiration for several related concepts such as distinguishing maps~\cite{Tucker}, proper distinguishing coloring~\cite{Collins}, distinguishing arc-coloring~\cite{Kalinowski2}, distinguishing index~\cite{Kalinowski}, and distinguishing threshold~\cite{Shekarriz}. 

The metric dimension and the distinguishing number were investigated for years without paying attention to each other. The starting observation of this paper is that solving the metric dimension problem for a graph enables to break its symmetries. We note in passing that this is not the first occasion on which the concept of metric dimension has been linked with automorphism-dependent parameters, such as in determining sets~\cite{Det_2, Det_1}. We proceed as follows.

In the next section we list some definitions needed, recall a few results, and deduce a couple of new ones to be used later on. In Section~\ref{sec:connection} we first show that $D(G) \le \dim(G)+1$ holds for every connected graph $G$. After that we prove that among trees, exactly paths and stars attain the bound, while among the connected unicyclic graphs such graphs are $t$-cycles for $t\in \{3,4,5\}$. We also show that for any $1\leq n< m$, there exists a graph $G$ with $D(G)=n$ and $\dim(G)=m$.

Almost every graph has trivial automorphism group, hence $D(G) = 1$ for almost all graphs. In addition, if a graph has a nontrivial automorphism group, then its distinguishing number is usually small, typically $2$. In view of the bound $D(G) \le \dim(G)+1$, it is natural to search for graphs with large distinguishing number, we do it in Section~\ref{sec:large-D}. In this direction we respectively classify the graphs $G$ with $D(G) = n(G)$, $D(G) = n(G)-1$, and $D(G) = n(G)-2$, where $n(G)$ denotes the order of $G$. 

\section{Preliminaries} 
\label{sec:prelim}

Let $G = (V(G), E(G))$ be a graph. The neighborhood of a vertex $u\in V(G)$, that is, the set of its neighbors, is denoted by $N_G(u)$. The diameter $\diam(G)$ of $G$ is the largest distance between the vertices of $G$. The complement of $G$ is denoted by $\overline{G}$. The join $G+H$ of graphs $G$ and $H$ is obtained from the disjoint union of $G$ and $H$ by making adjacent every vertex of $G$ with every vertex of $H$. The {\em kite} is a graph on five vertices obtained from a $K_4-e$ by adding a vertex and making it adjacent to one vertex of degree $2$. If $k$ is a positive integer, then $[k]$ denotes the set $\{1,\ldots, k\}$. 

If $t\ge 1$, then we define 
\begin{align*}
{\cal C}_t & = \{G:\ G\ \text{is a connected graph with}\ D(G) = t\}\,,\\
\overline{{\cal C}}_t & = \{G:\ G\ \text{is a disconnected graph with}\ D(G) = t\}\,.
\end{align*}

\begin{lemma} \label{lem:discon}
If $t\ge 1$, then $\overline{{\cal C}}_t = \{G:\ \overline{G}\in {\cal C}_t\} \setminus {\cal C}_t$.
\end{lemma}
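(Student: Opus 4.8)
The plan is to reduce the claimed set identity to two elementary facts about complementation. The first is that a graph and its complement have identical automorphism groups: a permutation of $V(G)$ preserves adjacency precisely when it preserves non-adjacency, so ${\rm Aut}(G) = {\rm Aut}(\overline{G})$ as permutation groups acting on the common vertex set $V(G) = V(\overline{G})$. Since a vertex coloring is distinguishing exactly when no nontrivial automorphism preserves it, and whether this holds depends only on the action of the automorphism group on the vertices, I would conclude that a coloring is distinguishing for $G$ if and only if it is distinguishing for $\overline{G}$, and therefore $D(G) = D(\overline{G})$ for every graph $G$. The second fact I would invoke is the standard observation that if $G$ is disconnected, then $\overline{G}$ is connected.

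With these in hand I would prove the two inclusions. For $\overline{{\cal C}}_t \subseteq \{G:\ \overline{G}\in {\cal C}_t\} \setminus {\cal C}_t$, take $G \in \overline{{\cal C}}_t$, so that $G$ is disconnected and $D(G) = t$. Disconnectedness of $G$ forces $\overline{G}$ to be connected, and $D(\overline{G}) = D(G) = t$ by the first fact, so $\overline{G} \in {\cal C}_t$; meanwhile $G \notin {\cal C}_t$ because $G$ is disconnected. Hence $G$ lies in the right-hand side.

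For the reverse inclusion, take any $G$ with $\overline{G} \in {\cal C}_t$ and $G \notin {\cal C}_t$. Then $\overline{G}$ is connected with $D(\overline{G}) = t$, whence $D(G) = t$ by the first fact. Since $D(G) = t$ holds but $G \notin {\cal C}_t$, the graph $G$ cannot be connected, so $G$ is disconnected with $D(G) = t$, that is, $G \in \overline{{\cal C}}_t$. This establishes both inclusions and hence the identity.

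I expect no serious obstacle here; the mathematical content sits entirely in the two recalled facts, and the only point demanding care is the set-theoretic bookkeeping. Specifically, one must notice that the hypothesis ``$\overline{G}$ connected'' need not be imposed separately on the left-hand side, since it is automatic once $G$ is disconnected, and that subtracting ${\cal C}_t$ is precisely what converts the connectedness of $\overline{G}$ into the disconnectedness of $G$ under the standing assumption $D(G)=t$.
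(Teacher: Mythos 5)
Your proof is correct and follows essentially the same route as the paper's: both argue the two inclusions using the facts that a disconnected graph has a connected complement and that $D(G) = D(\overline{G})$. The only difference is that you spell out the justification of $D(G) = D(\overline{G})$ via ${\rm Aut}(G) = {\rm Aut}(\overline{G})$, which the paper simply takes as known.
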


\begin{proof} 
Assume first that $H\in \overline{{\cal C}}_t$. As $H$ is disconnected, $\overline{H}$ is connected. Since also $D(H) = D(\overline{H}) = t$ we get $\overline{{\cal C}}_t \subseteq \{G:\ \overline{G}\in {\cal C}_t\} \setminus {\cal C}_t$. 

Assume second that $H\in \{G:\ \overline{G}\in {\cal C}_t\} \setminus {\cal C}_t$. Since $\overline{H}\in {\cal C}_t$, we have $D(H) = t$. Moreover, $H$ must be disconnected since $H\notin {\cal C}_t$. Thus $\{G:\ \overline{G}\in {\cal C}_t\} \setminus {\cal C}_t \subseteq \overline{{\cal C}}_t$. 
\end{proof}

\begin{theorem} {\rm \cite[Theorem 3]{0}}
\label{t2}
If $G$ is a connected graph with $n(G) \ge 2$, then $\dim(G) = n(G) - 1$ if and only if $G \cong K_{n(G)}$.
\end{theorem}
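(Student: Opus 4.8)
The plan is to prove both implications, the reverse one (that $\dim(G)=n(G)-1$ forces $G\cong K_{n(G)}$) being the substantive direction. Write $n=n(G)$. For the forward direction, suppose $G\cong K_n$. Any two distinct vertices of $K_n$ are at distance $1$, so if a vertex set $S$ omitted two distinct vertices $u,v$, then $d_G(u,s)=1=d_G(v,s)$ for every $s\in S$ and the pair $u,v$ would be unresolved; hence every resolving set has size at least $n-1$. Conversely, any set of $n-1$ vertices is resolving, since every pair of distinct vertices contains a member $s$ of the set and $d_G(s,s)=0$ distinguishes $s$ from every other vertex. This yields $\dim(K_n)=n-1$.

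For the reverse implication I would argue by contraposition, assuming $G$ is connected with $G\not\cong K_n$ and showing $\dim(G)\le n-2$. The key structural observation is that for any two distinct vertices $u,v$, the set $S=V(G)\setminus\{u,v\}$ resolves every pair of vertices except possibly the pair $\{u,v\}$ itself: any other pair contains some $s\in S$, and $s$ resolves that pair because $d_G(s,s)=0$ while its distance to the other vertex is positive. Consequently it suffices to exhibit two distinct vertices $u,v$ together with a third vertex $w$ satisfying $d_G(u,w)\ne d_G(v,w)$, for then $S=V(G)\setminus\{u,v\}$ is a resolving set of size $n-2$.

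To produce such a triple, I would suppose for contradiction that no three distinct vertices $u,v,w$ satisfy $d_G(u,w)\ne d_G(v,w)$. Fixing an arbitrary vertex $w$, this says that all vertices other than $w$ are equidistant from $w$; since $G$ is connected with $n\ge 2$, the vertex $w$ has a neighbor, which forces this common distance to equal $1$, so $w$ is adjacent to every other vertex. As $w$ was arbitrary, $G\cong K_n$, contradicting the hypothesis. Hence the desired triple exists and $\dim(G)\le n-2$. The only place that requires care is this reverse implication, and the argument reduces it to the elementary equidistance observation; I expect the modest obstacle to lie in phrasing the reduction cleanly, namely verifying that deleting two vertices can fail to resolve only the deleted pair.
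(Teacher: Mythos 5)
Your proposal is correct, but there is no in-paper argument to compare it against: the paper states this result as a quoted theorem (Theorem~3 of Chartrand, Eroh, Johnson and Oellermann, reference [0] in the bibliography) and does not reproduce its proof. Judged on its own, your argument is sound and complete. The forward direction is the standard one: in $K_{n(G)}$ any set missing two vertices leaves that pair unresolved, while in any connected graph a set omitting a single vertex resolves everything via the distance $0$ of each set member to itself. Your key reduction for the converse, namely that $V(G)\setminus\{u,v\}$ can fail to resolve only the pair $\{u,v\}$ itself, is correctly justified by the same distance-$0$ observation, and the contradiction step is valid: if no triple of distinct vertices $u,v,w$ satisfies $d_G(u,w)\ne d_G(v,w)$, then every vertex sees all other vertices at a common distance, which connectivity forces to be $1$, so every vertex is universal and $G\cong K_{n(G)}$. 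The degenerate case $n(G)=2$, where no such triple can exist, is harmless, since a connected graph on two vertices is $K_2$ and the contrapositive hypothesis is then vacuous. Conceptually your route is equivalent to the classical argument behind the bound $\dim(G)\le n(G)-\diam(G)$: a connected non-complete graph has diameter at least $2$, i.e., some vertex $w$ has both a neighbor and a non-neighbor, and deleting those two vertices yields a resolving set of size $n(G)-2$.
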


\begin{theorem} {\rm \cite[Theorem 4]{0}}
\label{t3}
If $G$ is a connected graph with $n(G) \geq 4$, then $\dim(G) = n(G) - 2$ if and only if $G= K_{s,t}$ $(s,t\geq 1)$, $G=K_s + \overline{K_t}$ $(s\geq 1, t\geq 2)$, or $G= K_s + (K_1 \cup K_t)$ $(s,t \geq 1)$. 
\end{theorem}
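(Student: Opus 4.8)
The plan is to prove both directions through the \emph{twin relation}: call vertices $x,y$ twins if $N_G(x)\setminus\{y\}=N_G(y)\setminus\{x\}$, add reflexivity, and check that this is an equivalence relation whose classes $V_1,\dots,V_k$ are each a clique or an independent set, with $G$ a blow-up of the quotient graph $G^*$ on these classes. Two facts would be recorded at the outset: (i) twins receive equal distances from every other vertex, so a resolving set omits at most one vertex of each class, giving $\dim(G)\ge n(G)-k$; and (ii) a set $W$ is resolving exactly when every pair lying outside $W$ is separated by some vertex of $W$. From (ii), deleting two vertices $\{u,v\}$ leaves a resolving set iff $u,v$ are not twins, so $\dim(G)\le n(G)-2$ iff $G\ne K_{n(G)}$; and $\dim(G)\ge n(G)-2$ iff for every triple $\{a,b,c\}$ some pair, say $\{a,b\}$, is separated only from inside the triple, that is $R(a,b):=\{w:d_G(a,w)\ne d_G(b,w)\}\subseteq\{a,b,c\}$.

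For the easy direction I would verify each family directly. In all three cases the vertices split into at most three twin classes, so deleting one representative from each of two suitably chosen classes yields a resolving set of size $n(G)-2$; conversely two twins inside one class are never separated, and in $K_s+(K_1\cup K_t)$ the representatives of the $K_s$-class and the $K_t$-class are separated \emph{only} by the apex vertex of $K_1$, so no triple of representatives can be removed. This pins the value at exactly $n(G)-2$.

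The substance is the forward direction. Assuming $\dim(G)=n(G)-2$, condition (ii) must hold, and the key observation is that $R(a,b)$ is always a union of whole twin classes apart from the classes of $a$ and $b$; hence if a pair is separated by exactly one further vertex (an \emph{almost-twin} pair, as in the third family), that vertex must be a singleton class. I would first get $k\ge 2$ from $\dim(G)\ge n(G)-k$, and then show $k\le 3$ by taking representatives of four distinct classes and deriving conflicting singleton requirements from the triple condition applied to $\{r_1,r_2,r_3\}$ and to $\{r_1,r_2,r_4\}$. With $k\le 3$, connectedness forces $G^*\in\{K_2,P_3\}$, and a short analysis of the clique/independent type of each class—discarding the all-clique cases that collapse to $K_{n(G)}$ and the complete-multipartite cases that drop the dimension to $n(G)-3$—leaves exactly $K_{s,t}$, $K_s+\overline{K_t}$, and $K_s+(K_1\cup K_t)$. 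I expect the main obstacle to be this final case analysis, and especially establishing $k\le 3$ cleanly: the almost-twin and singleton bookkeeping behind the $P_3$ quotient is where the three listed families genuinely separate from the infeasible configurations.
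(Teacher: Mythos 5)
First, a point of comparison: the paper does not prove this statement at all; it is imported as Theorem~4 of the cited reference \cite{0} (Chartrand, Eroh, Johnson, Oellermann). So your proposal can only be assessed on its own merits, and there it has a genuine gap.

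Your framework is fine: facts (i) and (ii), the equivalence $\dim(G)\le n(G)-2 \Leftrightarrow G\ne K_{n(G)}$, the triple criterion for $\dim(G)\ge n(G)-2$, the observation that $R(a,b)$ is a union of whole twin classes away from the classes of $a$ and $b$ (whence every ``bad'' pair in a triple of representatives has a singleton-class witness), and the verification of the three families are all correct. The gap is the central step $k\le 3$. From the triple condition on $\{r_1,r_2,r_3\}$ and $\{r_1,r_2,r_4\}$, the only possible conflict is when both triples name $\{r_1,r_2\}$ as the bad pair (a pair's witness is unique since $R(r_1,r_2)$ is a fixed set); in all other cases the two conclusions are compatible, e.g.\ bad pair $\{r_1,r_3\}$ with witness $r_2$ together with bad pair $\{r_1,r_4\}$ with witness $r_2$. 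Worse, even using all four triples among four representatives cannot yield a contradiction. Concrete witness: the gem $G=K_1+P_4$ (exactly the quotient {\bf G}$_6$ appearing in Theorem~\ref{Jannesari}), with $P_4=(2,1,3,4)$ and universal vertex $5$. It has five twin classes, all singletons, and one checks $R(1,2)=\{1,2,3\}$, $R(1,4)=\{1,4,2\}$, $R(3,4)=\{3,4,1\}$, $R(2,3)=\{2,3,4\}$: every one of the four triples inside $\{1,2,3,4\}$ contains a bad pair whose witness is a singleton class, with no conflict anywhere. So the constraints you plan to contradict are simultaneously satisfiable with $k\ge 4$. Of course the gem has $\dim(G)=n(G)-3$, but the only triples certifying this involve vertex $5$ (e.g.\ $\{2,4,5\}$ contains no bad pair, so $\{1,3\}$ is a resolving set); your sketch never looks at such triples, hence it cannot close.

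The missing idea is therefore a genuinely global one: you must exploit the triple condition on triples meeting \emph{all} classes (equivalently, the full strength of $\dim(G)=n(G)-2$), not just on triples among four chosen representatives. A practical way to organize this is to first record $\diam(G)\le 2$, which follows from $\dim(G)\le n(G)-\diam(G)$; then $R(a,b)\setminus\{a,b\}$ is just the symmetric difference $N(a)\bigtriangleup N(b)$ minus $\{a,b\}$, and the almost-twin bookkeeping becomes a purely neighborhood-level analysis of $G^{\ast}$ that one can push through all classes at once. A secondary slip: with $k\le 3$ in hand, connectedness allows $G^{\ast}=K_3$ as well as $K_2$ and $P_3$; you implicitly re-admit this case when discarding complete multipartite graphs, but the assertion ``connectedness forces $G^{\ast}\in\{K_2,P_3\}$'' is false as stated.
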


\begin{proposition}\label{prop:lemn-2}
Let $G$ be a connected graph with $\dim(G) = n(G) - 2 \ge 2$. If $\ell \geq 1$, then $D(G)=n(G)-\ell$ if and only if $G$ is one of the following graphs: 
\begin{itemize}
\begin{small}
\begin{multicols}{2}
\item[(a)] 
$K_{\ell+1, \ell+1}$
\item[(c)] 
$K_{\ell}+\overline{K_t}$, $t\geq \ell$
\item[(e)] 
$K_{\ell-1}+(K_t \cup K_{1})$, $t\geq \max \{2, \ell-1\}$
\item[(b)] 
$K_{t, \ell}$, $t\geq \ell+1$
\item[(d)] 
$K_t + \overline{K_{\ell}}$, $t\geq \ell\geq 2$
\item[(f)]
$K_{t}+(K_{\ell-1}\cup K_{1})$, $t\geq \max \{2, \ell-1\}$.
\end{multicols}
\end{small}
\end{itemize}
\end{proposition}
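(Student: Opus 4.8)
The plan is to combine the structural classification of Theorem~\ref{t3} with a direct evaluation of the distinguishing number of each graph that arises there. Since $\dim(G) = n(G) - 2 \ge 2$ forces $n(G) \ge 4$, Theorem~\ref{t3} tells us that $G$ is one of $K_{s,t}$ $(s,t \ge 1)$, $K_s + \overline{K_t}$ $(s \ge 1,\ t \ge 2)$, or $K_s + (K_1 \cup K_t)$ $(s,t \ge 1)$. Thus it suffices to compute $D$ on these three families, set $\ell = n(G) - D(G)$, and read off which parameter values produce each of (a)--(f). Because I will obtain $D$ exactly rather than merely bounding it, both directions of the equivalence drop out at once, each of (a)--(f) being visibly a specialization of one of the three families.

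The engine of the computation is a twin-class analysis. First I would note that in each family the vertex set splits into a few classes of mutually interchangeable vertices: the side(s) of the bipartition in $K_{s,t}$, and the universal clique, the independent (resp.\ inner-clique) part, and the vertex that is isolated in $K_1 \cup K_t$, for the two join families. Vertices within one class are twins (true twins inside the cliques, false twins inside the independent part), so an arbitrary permutation of a class extends to an automorphism; hence a distinguishing colouring must use pairwise distinct colours on each class, yielding the lower bound $D(G) \ge \max(\text{class sizes})$. For the matching upper bound I would colour each class rainbow with $\{1,2,\ldots\}$: the only way a nontrivial colour-preserving automorphism can then survive is by interchanging two whole classes, and whether that is possible is dictated entirely by the degree sequence.

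The key point—and the one place where the answer is not simply the largest class size—is this degree bookkeeping. In $K_s + \overline{K_t}$ the clique vertices are universal while the independent vertices have degree $s < n(G)-1$ (here $t \ge 2$ is used), and in $K_s + (K_1 \cup K_t)$ the three classes have pairwise distinct degrees once $t \ge 2$; in both cases no class maps onto another, so reusing colours is safe and $D(G) = \max(s,t)$. The genuinely special case is $K_{s,s}$, where the two parts share a degree and may be swapped: a rainbow colouring with $s$ colours leaves the part-swap colour-preserving, while one extra colour defeats it, so $D(K_{s,s}) = s+1$ whereas $D(K_{s,t}) = \max(s,t)$ for $s \ne t$. (When $t=1$ the family $K_s + (K_1 \cup K_t)$ degenerates to $K_s + \overline{K_2}$ and is absorbed into the previous family.)

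With these values the rest is arithmetic: setting $D(G) = n(G) - \ell$ sends $K_{s,s}$ to (a) and $K_{s,t}$ $(s \ne t)$ to (b); it splits $K_s + \overline{K_t}$ into (c) and (d) according to whether $\min(s,t)$ equals $s$ or $t$; and it splits $K_s + (K_1 \cup K_t)$ into (e) and (f) likewise, the shift $n(G) = s+t+1$ being exactly what turns $\ell$ into $\ell-1$. The main obstacle I anticipate is precisely this final bookkeeping: one must track the boundary $s=t$, where two of the six families overlap on a single graph; justify the thresholds $t \ge \max\{2,\ell-1\}$ in (e) and (f), with the $2$ coming from the degree-distinctness requirement $t \ge 2$ and the $\ell-1$ from the range $\min(s,t)=\ell-1$; and discard the degenerate small-$\ell$ instances (such as $\ell=1$, where an empty join factor would make the graph disconnected or complete) that the hypothesis $\dim(G) = n(G)-2$ already excludes. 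Once these ranges are matched the equivalence is complete.
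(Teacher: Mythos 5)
Your proposal is correct and takes essentially the same approach as the paper: reduce to the three families of Theorem~\ref{t3}, determine $D(G)$ on each family (with the sole exceptional value $D(K_{s,s})=s+1$ and $D=\max\{s,t\}$ in all other cases), and then solve $D(G)=n(G)-\ell$ to read off cases (a)--(f). The only difference is that you justify the distinguishing-number values via an explicit twin-class and degree argument, a step the paper simply asserts, so your write-up is if anything slightly more complete than the original.
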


\begin{proof}
Let $\ell \geq 1$. As we have assumed that $\dim(G) = n(G) - 2 \ge 2$, we need to check which of the graphs listed in Theorem~\ref{t3} fulfill the condition $D(G)=n(G)-\ell$. 

Assume first that $G= K_{s,t}$ $(s,t\geq 1)$. 
If $s=t$, then $D(G)=t+1$. The assumption $D(G)=n-\ell$ implies that $t+1=s+t-\ell$. So, $s=t=\ell+1$.  This means that $G= K_{\ell+1, \ell+1}$. If $s\neq t$, then $D(G)=\max \{s, t\}$. 
Let $\max \{s, t\}=t$. Then $t=s+t-\ell$, and so $s=\ell$. Hence, $G=K_{t,\ell}$ 
for $t\geq \ell+1$.  

Let $G=K_s + \overline{K_t}$ $(s\geq 1, t\geq 2)$. Then we have $D(G)=\max \{s, t\}$. If $\max \{s, t\}=t$, then $t=s+t-\ell$ and so $s=\ell$. 
This implies that $G=K_{\ell}+ \overline{K_t}$. And if $\max \{s, t\}=s$, then $t=\ell$ and $G$ is the graph as stated in (d).

Finally, suppose that $G= K_s + (K_1 \cup K_t)$ $(s,t \geq 1)$. 
Then we have $D(G)=\max \{s, t\}$. If $\max \{s, t\}=t$ ($\max \{s, t\}=s$), then $s=\ell -1$ ($t=\ell -1$). In such a situation, we obtain the graphs as stated in (e) and (f).
\end{proof}

\section{The connection} 
\label{sec:connection}

In this section we first show the basic connection between the distinguishing number and the metric dimension, that is, if $G$ is a connected graph, then $D(G)\le \dim(G) + 1$. Trees and unicyclic graphs which attain the equality are characterized. We also demonstrate that for any $1\leq n< m$, there exists a graph $G$ with $D(G)=n$ and ${\rm dim}(G)=m$.

\begin{proposition}\label{prop:main}
If $G$ is a connected graph, then $D(G)\leq \dim(G)+1$. Moreover, the bound is sharp. 
\end{proposition}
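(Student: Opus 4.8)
The plan is to read off a distinguishing coloring directly from a metric basis. First I would fix a metric basis $S = \{s_1, \ldots, s_k\}$ of $G$, where $k = \dim(G)$, and then color $G$ with $k+1$ colors by assigning to $s_i$ the color $i$ for each $i \in [k]$ and giving every remaining vertex the single color $k+1$. The claim to establish is that this coloring is distinguishing, which would immediately yield $D(G) \le k+1 = \dim(G)+1$.

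To verify that the coloring is distinguishing, I would take an arbitrary color-preserving automorphism $\phi$ of $G$ and argue that it is the identity. Since each of the vertices $s_1, \ldots, s_k$ is the unique vertex of its color, $\phi$ must fix each of them, that is, $\phi(s_i) = s_i$ for every $i \in [k]$. Then for an arbitrary vertex $v$, the fact that $\phi$ is an automorphism gives $d_G(v, s_i) = d_G(\phi(v), \phi(s_i)) = d_G(\phi(v), s_i)$ for all $i$, so $v$ and $\phi(v)$ share the same distance vector with respect to $S$. Because $S$ is a resolving set, this forces $v = \phi(v)$, and hence $\phi$ is trivial.

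For sharpness I would simply exhibit one extremal family. The complete graphs work cleanly: by Theorem~\ref{t2} we have $\dim(K_n) = n-1$, while $D(K_n) = n$ since every permutation of $V(K_n)$ is an automorphism and so no two vertices may receive the same color; thus $D(K_n) = n = \dim(K_n)+1$. As an alternative family one may note the paths $P_n$ with $n \ge 2$, for which $\dim(P_n) = 1$ and $D(P_n) = 2$.

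The entire content of the argument is the construction itself, so I do not anticipate a genuine obstacle. The only point needing care is the observation that the singleton color classes pin down the basis vertices under any color-preserving automorphism; once that is in hand, the defining property of a resolving set does the rest.
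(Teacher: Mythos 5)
Your proof is correct and follows essentially the same route as the paper: color a metric basis with distinct colors, give the remaining vertices one extra color, and use the fact that a color-preserving automorphism fixes the basis pointwise together with distance preservation and the resolving property to conclude the automorphism is trivial (the paper phrases this contrapositively, as a contradiction, but it is the same argument). Your sharpness example $K_n$ is also valid; the paper uses $P_n$ in the proof and notes $K_n$ immediately afterwards.
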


\begin{proof}
Let $S \subseteq V(G)$ be a metric basis of $G$, so that $|S| = \dim(G)$. Let $c$ be a coloring of $V(G)$ which respectively assigns colors from $[\dim(G)]$ to the vertices from $S$, and assigns color $\dim(G)+1$ to each vertex from $V(G)\setminus S$. We claim that $c$ is a distinguishing coloring for $G$. Suppose on the contrary that there exists a non-trivial automorphism $\varphi$ that preserves $c$. Then $\varphi(u) = u$ for each $v\in S$, hence, since $\varphi$ is non-trivial, there exist vertices $u,v \in V(G)\setminus S$, $u\ne v$, such that $f(u)=v$. Since an automorphism is a distance preserving mapping, for every $w \in S$ we have 
$$ d_G(u, w) = d_G(f(u), f(w)) = d_G(v, w)\,.$$ 
But then $S$ does not resolve $u$ and $v$, a contradiction.

To see that the bound is sharp, consider paths $P_n$, $n\geq 2$, for which we have $D(P_n) = 2$ and $\dim(G) = 1$.
\end{proof}

As observed in the proof of Proposition~\ref{prop:main}, paths attain the equality in Proposition~\ref{prop:main}. The equality is also attained by complete graphs $K_n$, $n\ge 2$, since $D(K_n) = n$ and $\dim(K_n) = n-1$. A sporadic example for the equality is the wheel graph $W_5$ for which we have $D(W_5) = 3$ and $\dim(W_5) = 2$, see~\cite{shan-2002}. We next show that besides paths, stars are the only trees that attain the equality. To prove it, the following definitions will be useful. 

Let $T$ be a tree. A vertex $u\in V(T)$  is a {\em leaf} if $\deg_T(u) = 1$, and a {\em branch} if $\deg_T(u) \ge 3$. The {\em terminal degree} of a branch $u$ is the number of leaves $x$, such that $d_T(x, u) < d_T(x, v)$ for all branches $v\ne u$. A branch is {\em external} if its terminal degree is strictly more than one, otherwise it is {\em internal}. The number of leaves and of external branches of $T$ are respectively denoted by $\ell (T)$ and $\ex(T)$. Using these definitions, the key result for the metric dimension of trees asserts that if $T$ is a tree different from a path, then 
\begin{equation}
\label{eq:did-of-trees}
\dim(T) = \ell(T) - \ex(T)\,,  
\end{equation}
see~\cite{harary, Khuller, slater}.
We further say that the branch closest to a leaf is the {\em ancestor} of the leaf. The leaf is {\em external} if its ancestor is external. Finally, the path between a leaf and its ancestor will be called a {\em leg}. 

\begin{theorem}
\label{thm:trees}
A tree $T$ satisfies the equality $D(T) = \dim(T)+1$ if and only if $T \in\{P_n, K_{1,n}:\ n\ge 2\}$. 
\end{theorem}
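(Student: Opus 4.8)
The plan is to prove the two implications separately, with the forward (characterization) direction carrying essentially all of the work. For the easy direction I would simply record the two families: for $P_n$ with $n\ge 2$ we have $\dim(P_n)=1$ and $D(P_n)=2$, while for $K_{1,n}$ with $n\ge 2$ the $n$ mutually interchangeable leaves force $D(K_{1,n})=n$, and since any resolving set must contain all but one leaf, $\dim(K_{1,n})=n-1$; in both cases $D=\dim+1$.

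For the converse I would assume $T$ is neither a path nor a star and aim to produce a distinguishing coloring using only $\dim(T)$ colors, which gives $D(T)\le \dim(T)<\dim(T)+1$. The engine is the observation already used in the proof of Proposition~\ref{prop:main}: if a coloring forces every color-preserving automorphism to fix a resolving set $W$ pointwise, then that automorphism is the identity, because it then preserves all distances to $W$ and $W$ resolves $T$. So the entire task reduces to exhibiting a $\dim(T)$-coloring that pins down a metric basis $W$ pointwise. Since $T$ is not a path, I would take the canonical basis coming from the tree formula~\eqref{eq:did-of-trees}: for each external branch keep the leaves of all but one of its legs; this set $W$ has size $\ell(T)-\ex(T)=\dim(T)$ and consists of leaves.

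The construction I would use colors the leaves of $W$ with distinct colors and recolors the remaining vertices with colors already in the palette, the whole point being to avoid spending the extra $(\dim(T)+1)$-st color of Proposition~\ref{prop:main}. A color-preserving automorphism automatically fixes every leaf of $W$ that received a private color, together with its entire leg and its ancestor branch; the only freedom left is to interchange two equal-length legs at a common branch, or to swap two basis leaves sitting in symmetric positions. I would therefore spend the recycling freedom exactly on killing these two possibilities: reuse one color on all internal (non-leaf) vertices, and assign the colors along the legs so that at every branch no two equal-length legs carry identical color sequences.

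The main obstacle is showing that this recycling is always possible when $T$ is neither a path nor a star, and this is where a structural case analysis enters. The dangerous configuration is a branch all of whose legs have length one, i.e.\ a bunch of pendant leaves at a single vertex: distinguishing $t$ such leaves genuinely needs $t$ colors, and when the whole tree is of this type one is forced up to $\dim(T)+1$ colors---this is precisely the star. I would argue that outside stars and paths there is always slack to absorb the saved color: either some leg has length $\ge 2$, so that its internal vertices supply extra color patterns (this already handles the equal-legged spiders, where $t\ge 3$ legs of length $\ge 2$ admit $t$ distinct color sequences using only $t-1=\dim(T)$ colors, since $(t-1)^2\ge t$ for $t\ge 3$), or there are at least two external branches, so that colors may be reused across branches while a third basis vertex witnesses that the two candidate swap-leaves lie at different distances and hence cannot be interchanged. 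Making this dichotomy exhaustive, and in particular treating the low-dimensional situation $\dim(T)=2$ where no such ``third witness'' vertex is available and a direct non-swappability argument is needed instead, is the delicate part that I expect to require the careful branch-and-leg bookkeeping set up just before the statement.
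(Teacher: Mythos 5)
Your easy direction is fine and agrees with the paper, but the converse---which is the entire content of the theorem---remains a plan rather than a proof, and the plan has a genuine gap at its center. First, the ``engine'' you invoke does not apply as stated: with only $\dim(T)$ colors available you cannot give every basis leaf a private color (the non-basis vertices must reuse palette colors), so you cannot conclude that a color-preserving automorphism fixes the resolving set pointwise. You acknowledge this, but then you simply \emph{assert} that the only residual freedom is interchanging equal-length legs at a common branch or swapping two symmetric basis leaves. That assertion is unproven, and proving it is essentially the whole difficulty: a priori an automorphism could exchange two internal branches together with their hanging subtrees (all of whose vertices carry recycled colors), and ruling this out requires a structural argument about where external branches sit relative to swapped subtrees. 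Second, you explicitly defer ``making this dichotomy exhaustive'' and the $\dim(T)=2$ case to ``careful branch-and-leg bookkeeping'' that is never carried out; the spider computation $(t-1)^2\ge t$ is an illustrative special case, not a treatment of general trees with mixtures of external and internal branches, legs of assorted lengths, and leaves hanging off internal branches.

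It is instructive to compare with how the paper closes exactly this hole. It splits on $\ex(T)=1$ versus $\ex(T)=k\ge 2$. In the first case $\dim(T)=\deg_T(b)-1$ by~\eqref{eq:did-of-trees} and a short explicit coloring works (all leaves but one get distinct colors; the exceptional leaf's neighbor gets color $2$). In the second case the paper first proves the lower bound $\dim(T)\ge \max\{k,\max_i \deg_T(b_i)-1\}$, and this surplus of colors is spent not on the basis leaves but on the \emph{external branches themselves}: each $b_i$ receives its own color $i$, and the external leaves at each $b_i$ are distinctly colored among themselves. Fixing the branches by color is precisely the device that kills subtree-exchanging automorphisms cheaply---once every external branch is the unique external branch of its color, it is fixed, its leaves are then forced, and internal branches cannot move either. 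Your leaf-centered construction lacks this device, which is why you are left needing the unproved classification of residual automorphisms. To repair your argument, either adopt the branch-coloring idea together with the lower bound $\dim(T)\ge k$, or actually prove your claim about what automorphisms can survive a coloring in which the basis leaves are rainbow-colored.
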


\begin{proof}
If $n\ge 2$, then $\dim(P_n) = 1$, $D(P_n) = 2$, $\dim(K_{1,n}) = n-1$, and $D(K_{1,n}) = n$. It remains to prove that for any tree $T \notin\{P_n, K_{1,n}:\ n\ge 2\}$ we have $D(T) \ne \dim(T)+1$. Since $\dim(P_1) = D(P_1) = 1$, we may assume in the rest of the proof that $|V(T)|\ge 3$. 

Assume first that $\ex(T) = 1$ and let $b$ be the unique external branch. Set $d = \deg_T(b)$. In this case, from~\eqref{eq:did-of-trees} we get $\dim(T) = d-1$. Recall that $T$ is not a star, and let $p$ be an arbitrary leaf of $T$ and $p'$ its neighbor. Color leaves different from $p$ with distinct colors from $[d-1]$. Assign color $2$ to $p'$, and color $1$ to all the remaining vertices, including $p$. This is a distinguishing coloring of $T$, hence $D(T) \le d - 1 = \dim(T)$. 

Assume second that $\ex(T) = k\ge 2$, and let $b_1, \ldots, b_k$ be the external branches of $T$. Let 
$$t=\max \left\{k, \max_{i\in [k]} \deg_T (b_i) - 1 \right\}\,.$$ 
We claim that $\dim(T)\ge t$. First, by~\eqref{eq:did-of-trees} we have $\dim(T)\ge k$. Assume next that, without loss of generality, $\deg_T (b_1) = \max_{i\in [k]} \deg (b_i)$. Let $\deg_T (b_1) = d_1 + d_2$, where $d_1$ is the number of legs containing $b_1$. Then in each such leg except one, there exists a member in any metric basis of $T$. Moreover, in view of~\eqref{eq:did-of-trees}, each neighbor of $b_1$ not a leg contributes at least $1$ to the metric dimension of $T$. This means that 
$$\dim(T) \ge (d_1 - 1) + d_2 = \deg_T (b_1) - 1 = \max_{i\in [k]} \deg (b_i) - 1\,.$$ 
We have thus proved that $\dim(T)\ge t$. Now, for $i\in [k]$, color $b_i$ with color $i$. Additionally, color the external leaves of the same ancestor $b_i$ with distinct colors from the set $[\deg_T(b_i)]$.  
(Note that since there are at least two external branches, there can be no more than $\deg_T(b_i)-1$ external leaves corresponding to the same ancestor $b_i$). Finally, assign color $1$ to all the remaining vertices. This coloring is a distinguishing coloring which proves that $D(T) \leq t$ and we are done. 
\end{proof} 

We next determine the graphs among unicyclic graphs which attain the equality in Proposition~\ref{prop:main}.

\begin{theorem}
\label{thm:unicyclic}
If $G$ is a connected unicyclic graph, then $D(G) = \dim(G)+1$ if and only if $G \in \{C_3, C_4, C_5\}$.
\end{theorem}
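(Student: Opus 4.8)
The plan is to split the argument into two regimes: bare cycles, and unicyclic graphs that are not cycles. For a cycle one has $\dim(C_n)=2$ for every $n\ge 3$, while the distinguishing numbers are known to be $D(C_3)=D(C_4)=D(C_5)=3$ and $D(C_n)=2$ for $n\ge 6$. Hence among cycles the equality $D=\dim+1$ holds precisely for $C_3,C_4,C_5$, which in particular settles the ``if'' direction. It then remains to prove that every connected unicyclic graph $G$ that is \emph{not} a cycle satisfies $D(G)\le \dim(G)$, so that by Proposition~\ref{prop:main} it cannot attain the bound.

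So fix such a $G$ with unique cycle $C=v_1\cdots v_k$, and for each $i$ let $T_i$ be the tree attached at $v_i$ (rooted at $v_i$); since $G\ne C$, at least one $T_i$ is nontrivial and $G$ has a leaf. Because $G$ contains a cycle it is not a path, so $\dim(G)\ge 2$ comes for free. Every automorphism of $G$ fixes $C$ setwise, acts on the sequence $(T_1,\dots,T_k)$ as a dihedral symmetry of $C$ preserving the cyclic word of rooted-tree isomorphism types, and acts within each $T_i$ by a rooted-tree automorphism. The idea is to mimic the coloring built in the proof of Theorem~\ref{thm:trees}: colour the attached forest by giving the external branches distinct colours and, at each external branch $b$, giving its external leaves distinct colours from the palette $[\deg_T(b)]$, painting everything else with colour $1$. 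By~\eqref{eq:did-of-trees} together with the leg/branch lower bounds, this breaks all symmetries internal to the $T_i$ and between isomorphic siblings while staying within $\dim(G)$ colours; the only surviving symmetries are then those acting nontrivially on $C$.

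Two ingredients must be matched. First, a lower bound on $\dim(G)$: every family of mutually twin leaves at a vertex costs all but one of them in any resolving set, each external branch of an attached tree contributes its excess degree exactly as in~\eqref{eq:did-of-trees}, and because $G\ne C$ the cycle itself still forces $\dim(G)\ge 2$; assembling these shows $\dim(G)$ is at least the number of colours used above. Second, the cycle-breaking step: since some $T_i$ is nontrivial, the necklace of tree-types is non-constant, so the subgroup of cycle-symmetries surviving the forest colouring is small---a residual rotation can occur only if the decorated pattern is periodic, and such periodicity forces several attachment points and hence extra metric dimension to spend. One then destroys the residual rotations and the at-most-one reflection by an asymmetric choice of two already-available colours on suitable cycle vertices, which is possible exactly because $\dim(G)\ge 2$. (A clean sufficient shortcut for part of this is $D(G)\le \mathrm{Det}(G)+1$, obtained by colouring a minimum determining set with distinct colours and everything else with one more colour; but since the determining number can equal $\dim(G)$ here, the direct construction is still needed.)

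I expect the main obstacle to be precisely this last bookkeeping: guaranteeing, uniformly over the placement of the attached trees, that the colours needed to kill the residual dihedral symmetry of the cycle never exceed the $\dim(G)$-budget. The hardest configurations are the tight ones---$\dim(G)=2$ with a single symmetric leg, or symmetric legs sitting on reflection-related or rotation-periodic vertices of a small cycle---since these are the closest relatives of the genuine exceptions $C_3,C_4,C_5$. Handling them will require a careful sub-case analysis (one attachment point versus several, and whether a reflection fixes or interchanges the attachment points), paired in each case with the matching cycle contribution to $\dim(G)$. The conceptual content is that any honest attachment simultaneously lowers the cycle's symmetry and raises (or at least sustains) its metric dimension, which is exactly why the ``$+1$'' survives only for the undecorated cycles $C_3,C_4,C_5$.
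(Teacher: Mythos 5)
Your overall strategy for the ``only if'' direction (show $D(G)\le\dim(G)$ for every unicyclic $G$ that is not a cycle, then invoke Proposition~\ref{prop:main}) is the right one, and your treatment of bare cycles is fine. But the proof is not complete: the two load-bearing steps are announced rather than carried out, and you say so yourself. First, the budget claim. Your colour count is justified by transferring~\eqref{eq:did-of-trees} to the attached trees (``each external branch of an attached tree contributes its excess degree exactly as in~\eqref{eq:did-of-trees}''), but~\eqref{eq:did-of-trees} is a statement about trees and fails for unicyclic graphs: a triangle with one pendant path has $\ell-\ex=1$ yet $\dim=2$, and in general the metric dimension of unicyclic graphs is governed by subtler (and cycle-dependent) rules, so the inequality ``number of colours used $\le\dim(G)$'' needs an actual proof. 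Second, and more seriously, the residual dihedral symmetry of the cycle. For every unicyclic non-cycle graph in which \emph{all} attached trees are paths (tadpoles, suns, symmetric legs on $C_3$, $C_4$, $C_5$ --- exactly the ``tight configurations'' you list), your forest colouring paints everything with colour $1$ and contributes nothing, so the entire proof collapses onto the deferred ``careful sub-case analysis.'' That analysis is the mathematical content of the theorem in this regime; without it there is no proof.

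For comparison, the paper's argument is organized so that neither gap arises. It splits on whether every attached tree $T_i$ is a path. If yes, then ${\rm Aut}(G)$ restricted to the cycle is a group of cycle symmetries; for $m\ge 6$ one gets $D(G)=2$ at once, and for $m\in\{3,4,5\}$ with at least one nontrivial attached path the paper writes down explicit $2$-colourings, which suffices since $\dim(G)\ge 2$ for any non-path graph --- this is precisely the case analysis you postponed, and it is short because it is done concretely rather than uniformly. If some $T_i$ is not a path, the paper avoids any lower-bound bookkeeping on $\dim(G)$ altogether: it takes a metric basis $S$, gives distinct colours to all basis vertices except one vertex $s_1\in S\cap V(T_1)$, and then forms a single repeated colour class $\{s_1, t_1, s_2\}$ where $t_1$ is the attachment vertex (which a cited result allows one to assume lies outside $S$) and $s_2\in S\setminus V(T_1)$; this produces a distinguishing colouring with exactly $\dim(G)$ colours. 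If you want to rescue your plan, the quickest route is to adopt this two-case split: do the small-cycle colourings explicitly, and replace your tree-mimicking colouring by a basis colouring in the non-path case, since the basis colouring is what eliminates the need to relate colour counts to $\dim(G)$.
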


\begin{proof}
Let $G$ be a unicyclic graph and let $t_1, \ldots, t_m$ be the consecutive vertices of the unique cycle $C$ of $G$. Let further $T_i$, $i\in [m]$, be the maximal connected subgraph of $G$ which contains $t_i$ and no other vertex of $C$. Then $T_i$ is a tree, where it is possible that $T_i\cong K_1$. In particular, if $G$ is a cycle, then every $T_i$ is the one vertex graph. We now distinguish two cases. 

In the first case every $T_i$, $i\in [m]$, is isomorphic to a path. Then every automorphism of $G$ restricted to $C$ is an automorphism of $C$. It follows that if $m \geq 6$, then $D(G)=2$ and hence $D(G) < \dim(G) + 1$. Consider next the cases $m\in \{3,4,5\}$. Assume that each path $T_i$ has at least two vertices and let $t'_i$ be the neighbor of $t_i$ in $T_i$. If $m = 3$, then assign color $1$ to $t_1$, $t'_1$ and $t_3$, and color $2$ to $t_2$, $t'_2$ and $t'_3$. If $m = 4$, then assign color $1$ to $t_1$, $t'_1$, $t_3$ and $t'_4$, and color $2$ to $t_2$, $t'_2$, $t'_3$ and $t_4$. And if $m = 5$, then assign color $1$ to $t_1$, $t'_1$, $t_3$, $t'_3$, $t'_4$ and $t_5$, and color $2$ to $t_2$, $t'_2$, $t_4$ and $t'_5$. Finally, in each of the three cases assign color $1$ to all the other vertices. In each of the cases we have a distinguishing coloring, so $D(G)\le 2$ and thus $D(G) \leq {\rm dim}(G)$. Note that the above argument is also applicable as soon as at least one $T_i$ has at least two vertices. Hence we can conclude that if $m\in \{3,4,5\}$, then $D(G) = \dim(G) + 1$ if and only if $G\in \{C_3, C_4, C_5\}$. 

In the second case at least one $T_i$ is not a path, we may assume without loss of generality that $T_1$ is such a tree. Let $S=\{s_1, \ldots, s_k\}$ be a metric basis of $G$.  Since $T_1$ is not a path, we have $S \cap V(T_1) \neq \emptyset$, and we may  assume that $s_1 \in S \cap V(T_1)$. Color now the vertices $s_i$, $i\in [k]\setminus \{1\}$, with color $i$, and all the other vertices, that is, vertices from $(V(G)\setminus S) \cup \{s_1\}$, with color $1$. If a non-trivial automorphism $f$ preserves this coloring, then $f(s_i) = s_i$ for $i\in [k]\setminus \{1\}$ and $f(s_1)\in V(G)\setminus S$. Since $t_2$ and $t_m$ are not resolved by vertices from $S \cap V(T_1)$, there exists a vertex in $S \setminus V(T_1)$, say $s_2$. By \cite[Corollary 7]{Hakanen} we may assume that $t_1 \notin S$, so by now it is colored $1$. Change now the color of $s_1$ and of $t_1$ to $2$, so that the color class of color $2$ contains the vertices $s_1, s_2$, and $t_1$. This is now a distinguishing coloring using $k = {\rm dim}(G)$ colors and we can conclude that $D(G) \le \dim(G)$.
\end{proof}

To conclude the section we demonstrate the following result which complements Proposition~\ref{prop:main}. 

\begin{proposition}\label{w}
For any $1\leq n< m$, there exists a graph $G$ with $D(G)=n$ and ${\rm dim}(G)=m$.
\end{proposition}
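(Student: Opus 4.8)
The plan is to construct, for each pair $1 \le n < m$, an explicit \emph{tree} realizing the two prescribed values. The tree formula~\eqref{eq:did-of-trees} will take care of $\dim(G) = m$, while a direct analysis of the (transparent) automorphism group will take care of $D(G) = n$. I would treat the cases $n = 1$ and $n \ge 2$ separately, since for $n=1$ the graph must be asymmetric, whereas for $n \ge 2$ a large twin class does the work.

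For $n = 1$ I would take the spider $G$ with a single branch vertex $c$ from which emanate $m+1$ legs of pairwise distinct lengths $1, 2, \ldots, m+1$. Because the legs have distinct lengths, every automorphism fixes $c$ (the unique vertex of degree $\ge 3$), maps each leg to a leg of the same length, and hence fixes each leg setwise; a rooted path being rigid, $\mathrm{Aut}(G)$ is then trivial and $D(G) = 1$. As $G$ is not a path, \eqref{eq:did-of-trees} applies: $c$ is the unique branch and it is external, so $\ex(G) = 1$ and $\ell(G) = m+1$, giving $\dim(G) = (m+1) - 1 = m$.

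For $n \ge 2$ I would use a caterpillar $G$ on a backbone path $u_0 u_1 \cdots u_s$ with $s = m - n + 1$, attaching a bundle of $n$ leaves to $u_0$ and a bundle of $2$ leaves to each of $u_1, \ldots, u_s$. Every $u_i$ is an external branch, so $\ex(G) = s+1$ and $\ell(G) = n + 2s$, whence \eqref{eq:did-of-trees} gives $\dim(G) = (n+2s) - (s+1) = n + s - 1 = m$. The $n$ leaves at $u_0$ are mutually twin, so any distinguishing coloring must give them distinct colors; thus $D(G) \ge n$. For the reverse bound I would color the bundle at $u_0$ rainbow with the colors $[n]$ and color each $2$-leaf bundle with the two colors $\{1,2\}$.

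The step requiring the most care, and the main obstacle, is verifying $D(G) \le n$, i.e.\ that this coloring admits no nontrivial color-preserving automorphism. For $n \ge 3$ this is clean: since only $u_0$ has $n \ge 3$ leaf-neighbors, it is fixed by every automorphism, the backbone cannot be reversed, and $\mathrm{Aut}(G) = S_n \times (\mathbb{Z}_2)^{s}$ acts only by permuting leaves within the bundles, all of which the coloring rigidifies. The delicate case is $n = 2$, where all bundles have size $2$ and the backbone reversal $u_i \leftrightarrow u_{s-i}$ becomes a genuine candidate automorphism. Here I would additionally choose the colors on $u_0, \ldots, u_s$ so that the backbone color sequence is not a palindrome; such a sequence exists for every admissible $s$, it kills the reversal, and combined with the bichromatic bundles it leaves only the identity, keeping the total number of colors equal to $2$. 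Confirming this last point is the crux of the construction.
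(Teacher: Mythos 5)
Your proof is correct, but for $n \ge 2$ it takes a genuinely different route from the paper's. The case $n=1$ is essentially identical: both you and the paper use the spider with legs of pairwise distinct lengths (the paper's $T_k$ with $k=m+1$), whose asymmetry gives $D=1$ and whose dimension is read off from~\eqref{eq:did-of-trees}. For $n \ge 2$, however, the paper leaves the class of trees: it forms $G_{n,m}$ from the disjoint union of the spider $T_{m-n+2}$ and $K_n$ by joining every vertex of $K_n$ to the spider's center. In that construction the symmetry-breaking part is trivial (the clique vertices are mutual twins, forcing $n$ colors, and the attached spider is rigid), but the claim $\dim(G_{n,m})=m$ concerns a non-tree graph, so~\eqref{eq:did-of-trees} does not apply directly, and the paper in fact asserts this value without verification. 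Your caterpillar construction buys exactly the opposite trade-off: $\dim(G)=m$ is a mechanical consequence of~\eqref{eq:did-of-trees} since the graph is a tree, while the work is shifted to proving $D(G)\le n$, and you correctly isolate the only delicate point — the backbone reversal $u_i \leftrightarrow u_{s-i}$ when $n=2$, all bundles then having size two — which you eliminate with a non-palindromic $\{1,2\}$-coloring of $u_0,\ldots,u_s$ (such a sequence exists because $s+1 \ge 3$ for $n=2<m$). Both arguments are complete; yours has the advantage of being fully self-contained, every quantity being computed inside trees, whereas the paper's makes the distinguishing number obvious at the price of an unproved metric dimension claim.
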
 

\begin{proof}
If $k\geq 3$, then let $T_k$ be the tree obtained from $K_{1,k}$ by respectively subdividing its edges $0,1,\ldots, k-1$ times. Then $T_k$ is asymmetric, so that $D(T_k) = 1$. On the other hand, from~\eqref{eq:did-of-trees} we get $\dim(T_k) = k-1$. This settles the case $n = 1$. 

Assume now that $2\leq n < m$. Let $G_{n,m}$ be the graph obtained from the disjoint union of $T_{m-n+2}$ and $K_n$ by adding the edges between the maximum degree vertex of $T_{m-n+2}$ and all the vertices of $K_n$. Then $D(G_{n,m})=n$ and $\dim(G_{n,m})=m$. 
\end{proof} 

\section{Graphs $G$ with $D(G)$ close to $n(G)$} 
\label{sec:large-D}

In this section we classify graphs $G$ such that $D(G) \in \{n(G), n(G)-1, n(G)-2\}$. First, combining Proposition~\ref{prop:main} with Theorem~\ref{t2} and having Lemma~\ref{lem:discon} in mind we get: 

\begin{corollary}\label{cor:r1}
If $G$ is a graph, then $D(G) = n(G)$ if and only if $G \in \{K_{n(G)}, \overline{K}_{n(G)} \}$.
\end{corollary}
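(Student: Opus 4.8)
The plan is to characterize the graphs attaining $D(G)=n(G)$ by combining the upper bound $D(G)\le \dim(G)+1$ from Proposition~\ref{prop:main} with the extremal characterization of metric dimension in Theorem~\ref{t2}, handling the connected and disconnected cases separately and then reconciling them via Lemma~\ref{lem:discon}. Write $n=n(G)$ throughout.

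\begin{proof}
First I would dispose of the connected case. If $G$ is connected and $D(G)=n$, then Proposition~\ref{prop:main} gives $n = D(G) \le \dim(G)+1$, so $\dim(G) \ge n-1$. On the other hand, it is a standard fact that $\dim(G) \le n-1$ for every graph of order $n$ (one always needs at most one fewer landmark than there are vertices). Hence $\dim(G) = n-1$, and Theorem~\ref{t2} forces $G \cong K_n$. Conversely $D(K_n) = n$, since the symmetric group acts on the vertices of $K_n$ and any two vertices sharing a color can be transposed by an automorphism; so among connected graphs the only example is $K_n$.

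Next I would treat the disconnected case by passing to complements. If $G$ is disconnected with $D(G)=n$, then $G \in \overline{{\cal C}}_n$, so by Lemma~\ref{lem:discon} we have $\overline{G} \in {\cal C}_n$, i.e.\ $\overline{G}$ is connected with $D(\overline{G}) = n = n(\overline{G})$. By the connected case already settled, $\overline{G} \cong K_n$, and therefore $G \cong \overline{K}_n$. Conversely, $\overline{K}_n$ is disconnected (for $n\ge 2$) and $D(\overline{K}_n) = D(K_n) = n$ since a graph and its complement share the same automorphism group and hence the same distinguishing number. The small case $n=1$ is trivial, as $K_1 = \overline{K}_1$ and $D = 1 = n$. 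Combining the two cases yields $D(G)=n$ if and only if $G \in \{K_n, \overline{K}_n\}$, as claimed.
\end{proof}

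The only step requiring genuine care is the inequality $\dim(G)\le n-1$ used to pin down the connected case; everything else is a direct application of the cited results together with the automorphism-preserving property of complementation that underlies Lemma~\ref{lem:discon}. I expect no real obstacle, since the heavy lifting is done by Proposition~\ref{prop:main} and Theorem~\ref{t2}; the main point is simply to organize the connected/disconnected dichotomy cleanly and to invoke $D(G) = D(\overline{G})$ so that the disconnected case reduces to the connected one.
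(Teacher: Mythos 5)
Your proposal is correct and follows exactly the route the paper intends for this corollary: the paper states it as an immediate consequence of combining Proposition~\ref{prop:main} (giving $\dim(G) \ge n(G)-1$ in the connected case), Theorem~\ref{t2} (forcing $G \cong K_{n(G)}$), and Lemma~\ref{lem:discon} (reducing the disconnected case to the complement). You have merely written out the details---including the standard bound $\dim(G)\le n(G)-1$, the fact that $D(K_n)=n$, and the trivial case $n=1$---that the paper leaves implicit.
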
 

Using the classification of graphs $G$ with $\dim(G) = n(G) - 2$ from Theorem~
\ref{t3} and its application in Proposition~\ref{prop:lemn-2}, the graphs with $\dim(G) = n(G) - 1$ are the following.

\begin{theorem}\label{then-2}
If $G$ is a graph, then $D(G) = n(G) - 1$ if and only if $G$ is one of $C_4$, 
$2K_{2}$, $K_{t, 1}$, and $K_{t}\cup K_1$, where $t\geq 2$.
\end{theorem}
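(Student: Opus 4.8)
The plan is to characterize all graphs $G$ with $D(G) = n(G) - 1$ by splitting into the connected and disconnected cases, exactly as the preceding corollary did for $D(G) = n(G)$. For the connected case, the key tool is Proposition~\ref{prop:main}, which gives $D(G) \le \dim(G) + 1$. Since we want $D(G) = n(G) - 1$, this forces $\dim(G) \ge n(G) - 2$. By Theorem~\ref{t2}, equality $\dim(G) = n(G) - 1$ happens only for $K_{n(G)}$, but then $D(K_{n(G)}) = n(G) \ne n(G) - 1$, so this subcase contributes nothing. Hence the only remaining possibility is $\dim(G) = n(G) - 2$, and I can feed the classification of Theorem~\ref{t3} directly into Proposition~\ref{prop:lemn-2} with the parameter choice $\ell = 1$.

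Concretely, I would substitute $\ell = 1$ into the six families (a)--(f) of Proposition~\ref{prop:lemn-2} and read off which graphs survive. Family (a) gives $K_{2,2} = C_4$; family (b) gives $K_{t,1}$ with $t \ge 2$; families (c) and (d) with $\ell = 1$ degenerate ($K_1 + \overline{K_t}$ is a star $K_{1,t}$, already captured by (b), and $K_t + \overline{K_1} = K_{t+1}$, which is excluded because its $D$ equals its order); and families (e) and (f) with $\ell = 1$ involve $K_0$ or $K_{\ell-1} = K_0$, so they reduce to $K_t \cup K_1$ joined appropriately, yielding the connected graph on the relevant vertex set. I should check each substitution carefully to see whether it produces a genuinely new connected graph of order at least $4$ or collapses into an already-listed family or an excluded complete graph; the bookkeeping here is the only delicate part, since several of the small-$\ell$ specializations coincide or degenerate. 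The connected graphs that remain are $C_4$ and $K_{t,1}$ for $t \ge 2$.

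For the disconnected case, I would invoke Lemma~\ref{lem:discon}, which says $\overline{{\cal C}}_t = \{G : \overline{G} \in {\cal C}_t\} \setminus {\cal C}_t$, together with the fact that complementation preserves the distinguishing number, $D(G) = D(\overline{G})$. A disconnected graph $G$ with $D(G) = n(G) - 1$ must therefore have connected complement $\overline{G}$ with $D(\overline{G}) = n(\overline{G}) - 1$, so $\overline{G}$ lies in the connected list just obtained. Taking complements of $C_4$ and of $K_{t,1}$ gives $\overline{C_4} = 2K_2$ and $\overline{K_{t,1}} = K_t \cup K_1$, and I must verify these are indeed disconnected (they are, so they are not already counted among the connected solutions) and that they have the correct order and distinguishing number. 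This yields the two disconnected families $2K_2$ and $K_t \cup K_1$ with $t \ge 2$.

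The main obstacle I anticipate is not any deep argument but the careful case analysis in the connected part: making sure that the $\ell = 1$ specializations of families (c)--(f) in Proposition~\ref{prop:lemn-2} either collapse into families (a)--(b) or produce complete graphs that must be discarded, so that no spurious graph is included and none is missed. One must also confirm throughout that the hypothesis $\dim(G) = n(G) - 2 \ge 2$ of Proposition~\ref{prop:lemn-2} is compatible with $n(G) \ge 4$, and handle any boundary cases of small order (such as whether $K_{t,1}$ with small $t$ or $C_4$ itself meet the order hypotheses of Theorem~\ref{t3}) by direct inspection. Once the bookkeeping is pinned down, assembling the connected list with its complements via Lemma~\ref{lem:discon} completes the proof.
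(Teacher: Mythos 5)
Your proposal is correct and follows essentially the same route as the paper: force $\dim(G)=n(G)-2$ in the connected case via Proposition~\ref{prop:main}, Theorem~\ref{t2} and Corollary~\ref{cor:r1}, read off the graphs by setting $\ell=1$ in Proposition~\ref{prop:lemn-2}, and obtain the disconnected graphs $2K_2$ and $K_t\cup K_1$ as complements via Lemma~\ref{lem:discon}. The degenerate-case bookkeeping you flag for families (c)--(f) is handled just as tersely in the paper's own proof, so there is no substantive difference.
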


\begin{proof} 
Assume first that $G$ is a connected graph. By Proposition \ref{prop:main}, $\dim(G) \in \{n(G)-2, n(G)-1\}$. Moreover, by Theorem~\ref{t2} and Corollary~\ref{cor:r1}, it more specifically holds that $\dim(G) \neq n(G)-1$, that is, $\dim(G)= n(G)-2$. To determine such graphs, set $\ell=1$ in Proposition~\ref{prop:lemn-2}. Then $C_4$ and the graphs $K_{t, 1}$, $t\geq 2$, are obtained from Proposition~\ref{prop:lemn-2} (a) and (b). Since $\ell =1$, $G$ is not equal to any graph in Proposition~\ref{prop:lemn-2} (d), (e) and (f). The graphs $2K_{2}$, and $K_{t}\cup K_1$, $t\geq 2$, are then deduced by Lemma~\ref{lem:discon}. 
\end{proof} 

In the rest of the section we classify the graphs with $\dim(G) = n(G) - 2$. To do so, we use Proposition~\ref{prop:main} together with the forthcoming Theorems~\ref{Jannesari} and \ref{Hernando} in which graphs of metric dimension relevant to us and of given diameter are described. For stating the next theorem, some preparation is needed. 

Vertices $u$ and $v$ of a graph $G$ are {\it twins} if ${\rm N}_G(v) \setminus \{u\}={\rm N}_G(u) \setminus \{u\}$. Define the relation $\equiv$ on $V(G)\times V(G)$ by setting $u \equiv v$ if $u = v$, or if $u$ and $v$ are twins. Then $\equiv$ is an equivalence relation. For a vertex $v\in V(G)$ we denote its equivalence class by $v^{\ast}$, that is, 
$$v^{\ast} = \{u \in V(G):\ u \equiv v\}\,.$$ 
The {\em twin graph} $G^{\ast}$ of $G$ is the quotient graph with respect to the relation  $\equiv$, that is, 
$$V(G^{\ast}) = \{v^{\ast}:\ v \in V(G)\}\quad {\rm and}\quad E(G^{\ast}) = \{v^{\ast} u^{\ast}:\ uv \in E(G)\}\,.$$  
A twin equivalence class $v^{\ast}$ of $G$ induces an edgeless graph or a complete graph. This subgraph is denoted by $G[v^{\ast}]$. We will further say that $v^{\ast}$ is of 
\begin{itemize}
\item {\em type (1)}, if $G[v^{\ast}] \cong K_1$, 
\item {\em type (K)}, if $G[v^{\ast}] \cong K_2$, where $r\ge 2$, 
\item {\em type (N)}, if $G[v^{\ast}] \cong \overline{K_r}$, where $r\geq 2$.
\end{itemize}
Furthermore, we say that $v^{\ast} \in G^{\ast}$ is of {\em type (1K)} if $v^{\ast}$
is of type (1) or (K), of {\em type (1N)} if $v^{\ast}$ is of type (1) or (N), of {\em type (KN)} if $v^{\ast}$ is of type (K) or (N), and of {\em type (1KN)} if 
$v^{\ast}$ is of type (1), (K) or (N). For more details about twin graphs see~\cite{Hernando, Jannesari}. Now we can recall the following result. 

\begin{theorem} {\rm \cite[Theorem 1]{Jannesari}}
\label{Jannesari}
Let $G$ be a connected graph with $\diam(G) = 2$. Then ${\rm dim}(G)= n(G)-3$ if and only if $G^{\ast}$ is one of the following graphs.
\begin{itemize}
\item[{\bf G$_1$}.] 
$G^{\ast}= K_{3}$ and has at most one vertex of type (1K);
\item[{\bf G$_2$}.] 
$G^{\ast}= P_3$ and one of the following cases holds:
\begin{itemize}
\item[(a)] 
The degree-$2$ vertex is of type (N), and one of the leaves is of type (K) and the other is of any type;
\item[(b)] 
One of the leaves is of type (K), the other is of type (KN) and the degree-$2$ vertex is of any type;
\end{itemize}
\item[{\bf G$_3$}.] 
$G^{\ast}$ is a triangle with a pendant edge, one of the degree-$2$ vertices is of type (N), the other is of type (1K), and the leaf is of type (1N). Moreover, a degree-$2$ vertex of type (K) yields the leaf and the degree-$3$ vertex are not of type (N);
\item[{\bf G$_4$}.] 
$G^{\ast}= C_5$, and each vertex is of type (1);
\item[{\bf G$_5$}.] 
$G^{\ast}$ is a $C_5$ with a chord, adjacent degree-$2$ vertices are of type (1), the other
vertices are of type (1K);
\item[{\bf G$_6$}.] 
$G^{\ast}=K_1 + P_4$, the degree-$4$ vertex is of any type, the others are of type (1K). Furthermore, two non-adjacent vertices are not of type (K), and two adjacent
vertices are not of different types (K) and (N);
\item[{\bf G$_7$}.] 
$G^{\ast}$ is a kite with a pendant edge adjacent to a degree-$3$ vertex, the leaf is of type (1), the degree-$4$ and degree-$3$ vertices are type (1K), one of the degree-$2$ vertices is of type (K) and the other is of type (1);
\item[{\bf G$_8$}.] 
$G^{\ast}$ is a kite, one of the degree-$2$ vertices is of type (K), the other is of type (1), one of the degree-$3$ vertices is of type (N), and the other is of type (1K);
\item[{\bf G$_9$}.] 
$G^{\ast}= C_4$, two adjacent vertices are of type (K), the others are of type (1); 
\item[{\bf G$_{10}$}.] 
$G^{\ast}= C_4 + K_1$, two degree-$3$ adjacent vertices are of type (K), degree-$4$ vertex is of type (1K), others are of type (1).
\end{itemize}
\end{theorem}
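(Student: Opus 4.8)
The plan is to prove the characterization by reducing the resolving problem to a combinatorial problem on the twin graph $G^{\ast}$ and then carrying out a finite case analysis over the possible shapes of $G^{\ast}$. Write $C_1,\dots,C_k$ for the twin classes of $G$, with $|C_i|=r_i$ and $k=n(G^{\ast})$, and recall the standard fact that no vertex outside a twin class can resolve two vertices inside it; hence every resolving set $S$ contains at least $r_i-1$ vertices of each $C_i$, so that the complement $A=V(G)\setminus S$ satisfies $|A\cap C_i|\le 1$ for every $i$. Moreover, since any $u\in S$ resolves every pair containing $u$, the set $S$ is resolving if and only if every pair of vertices lying in $A$ (necessarily in distinct classes) is resolved by $S$. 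Consequently $\dim(G)=n(G)-\mu(G)$, where $\mu(G)$ is the maximum size of such a ``left-out'' set $A$, and the theorem becomes the assertion that $\mu(G)=3$ exactly for the graphs whose twin graph is $G_1,\dots,G_{10}$ with the prescribed types.

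Second, I would translate the condition that $A$ is resolved into the language of $G^{\ast}$, exploiting $\diam(G)=2$ so that all distances are $1$ or $2$ and adjacency in $G$ between vertices of different classes coincides with adjacency of the classes in $G^{\ast}$. A short distance computation then shows that for two left-out representatives $a_i\in C_i$, $a_j\in C_j$, the pair $\{a_i,a_j\}$ is resolved by $S$ precisely when one of the following holds: $C_i$ is of type (K) with $C_i^{\ast}C_j^{\ast}\notin E(G^{\ast})$, or $C_i$ is of type (N) with $C_i^{\ast}C_j^{\ast}\in E(G^{\ast})$ (or the symmetric condition on $C_j$); some class $C_l$ of type (K) or (N) with $l\ne i,j$ is adjacent in $G^{\ast}$ to exactly one of $C_i^{\ast},C_j^{\ast}$; or some class $C_l$ of type (1) with $l\ne i,j$ and $C_l\not\subseteq A$ is adjacent to exactly one of $C_i^{\ast},C_j^{\ast}$. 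The first two kinds of resolution are unconditional, since a (K)- or (N)-class always retains a spare vertex in $S$, while the last is conditional: a type-(1) class can help only if it is not itself left out. This is the crux of the reduction, as the type-(1) classes are simultaneously the vertices one wishes to leave out and the only conditional resolvers, which is what makes $\mu(G)$ depend delicately on $G^{\ast}$.

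Third, armed with these rules I would first bound $k=n(G^{\ast})$. The inequality $\dim(G)\ge n(G)-k$ gives $k\ge 3$ at once; for the upper bound one argues that if $G^{\ast}$ is too large then, since $G^{\ast}$ has diameter at most $2$ and a correspondingly small metric dimension, one can always select a left-out transversal of size at least $4$, forcing $\mu(G)\ge 4$. This pins $k\in\{3,4,5,6\}$. For each such $k$ I would enumerate the connected twin graphs of diameter $2$ on $k$ vertices (for instance $K_3$ and $P_3$ for $k=3$; $C_4$ and the triangle with a pendant edge for $k=4$; $C_5$, the chorded $C_5$, $K_1+P_4$, the kite and $C_4+K_1$ for $k=5$; and the kite with a pendant edge for $k=6$) and, using the resolution rules, determine exactly which type assignments give $\mu(G)=3$, that is, permit leaving out three classes but never four. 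This yields the list $G_1,\dots,G_{10}$ together with their type conditions, after which the ``if'' direction reduces to the routine verification that each listed configuration realizes a diameter-$2$ graph with $\mu(G)=3$.

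The hard part will be this third step. Bounding $k$ cleanly requires excluding every larger twin graph, and for each admissible $G^{\ast}$ the interplay between leaving out a type-(1) class and destroying a conditional resolver must be tracked carefully: one must verify simultaneously that some transversal of size $3$ is resolvable and that no transversal of size $4$ is, across all type assignments, while also discarding configurations whose realizing graph would not have diameter exactly $2$. Organizing this exhaustive but finite analysis so that no configuration is missed, and matching the surviving cases precisely to the stated type constraints of $G_1,\dots,G_{10}$, is where essentially all the work lies.
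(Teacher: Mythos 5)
First, note what you are being compared against: the paper does not prove this statement at all --- it is imported verbatim as \cite[Theorem 1]{Jannesari}, so the only possible comparison is with the proof in that reference. Your general framework is in fact the same one used there (and in the related work of Hernando et al.): pass to the twin graph, observe that any resolving set must contain all but at most one vertex of each twin class, so that $\dim(G)=n(G)-\mu(G)$ where $\mu(G)$ is the maximum size of a ``left-out'' transversal whose complement resolves $G$, and then, using $\diam(G)=2$, translate resolvability of a pair of left-out vertices into adjacency conditions in $G^{\ast}$ depending on the types (1), (K), (N). I checked these local rules and they are correct, including the key subtlety you identify: classes of type (K) or (N) always retain a spare resolver in $S$, whereas a type-(1) class resolves only conditionally, namely when it is not itself left out.

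The genuine gap is that everything after this setup --- which is where the entire content of the theorem lives --- is announced rather than proved. Your bound $n(G^{\ast})\le 6$ rests on the assertion that a large twin graph of diameter at most $2$ ``has correspondingly small metric dimension'' and hence admits a left-out transversal of size $4$; no such implication has been established, and making it precise is essentially equivalent to the theorem itself (the conditional nature of type-(1) resolvers, which you yourself flag as the crux, is exactly what blocks any quick argument here). The subsequent enumeration is also not carried out, and the candidate lists you give are incomplete even as stated: for $k=4$ one must also analyze $K_4$, $K_4-e$ and $K_{1,3}$, not just $C_4$ and the paw, and for $k=5,6$ there are many more connected graphs of diameter at most $2$ than the five, respectively one, you name (note also that $G^{\ast}$ may have diameter $1$, as for $K_3$, so the enumeration must cover diameter $\le 2$, not $=2$). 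Finally, matching the surviving type assignments to the delicate constraints in {\bf G}$_3$ and {\bf G}$_6$, and verifying the converse direction (each listed configuration really has $\diam(G)=2$ and $\dim(G)=n(G)-3$), is untouched. So what you have is a correct and standard reduction plus a plan; as a proof of the characterization it is missing its two load-bearing steps.
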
 

\begin{observation} \label{obser}
Let $m$ be a positive integer, $G$ a connected graph, and $V(G^{\ast})=\{v^{\ast}_1, \ldots, v^{\ast}_{n(G^{\ast})}\}$. If $D(G) =|v^{\ast}_i|$  for some $i\in [n(G^{\ast})]$ and $\Sigma_{j\in n(G^{\ast})\atop j\ne i} |v^{\ast}_j| \ne m$, 
then $D(G) \neq n(G)-m$.
\end{observation}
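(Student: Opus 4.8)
The plan is to derive the statement as an immediate counting consequence of the fact that the twin relation $\equiv$ partitions $V(G)$. First I would record the basic identity
$$n(G) = \sum_{j=1}^{n(G^{\ast})} |v^{\ast}_j| = |v^{\ast}_i| + \sum_{\substack{j\in [n(G^{\ast})]\\ j\ne i}} |v^{\ast}_j|\,,$$
which holds because the equivalence classes $v^{\ast}_1, \ldots, v^{\ast}_{n(G^{\ast})}$ are pairwise disjoint and together cover $V(G)$, so that their cardinalities add up to the order of $G$.

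Next I would argue by contraposition. Suppose, toward a contradiction, that $D(G) = n(G) - m$. Combining this with the hypothesis $D(G) = |v^{\ast}_i|$ yields $|v^{\ast}_i| = n(G) - m$, equivalently $m = n(G) - |v^{\ast}_i|$. Substituting the partition identity recorded in the previous step gives $m = \sum_{j\ne i} |v^{\ast}_j|$, which contradicts the standing assumption $\sum_{j\ne i} |v^{\ast}_j| \ne m$. Hence $D(G) \ne n(G) - m$, as claimed.

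There is essentially no obstacle here: the entire content lies in the partition identity, after which the conclusion follows by a one-line rearrangement. The statement is best understood as a bookkeeping tool for the forthcoming classification arguments, where one typically knows that $D(G)$ coincides with the size of one specific twin class $v^{\ast}_i$ and wishes to rule out, quickly and uniformly, those values of $m$ for which the equality $D(G) = n(G) - m$ is arithmetically impossible.
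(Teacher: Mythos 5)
Your proof is correct and is precisely the argument the paper has in mind: the statement is given as an Observation without proof exactly because it reduces to the partition identity $n(G)=\sum_j |v^{\ast}_j|$ for the twin equivalence classes, followed by the one-line rearrangement you give. (Minor wording quibble: you announce contraposition but actually argue by contradiction; the logic is fine either way.)
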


We say that a graph $G$ is {\it almost asymmetric} if there is no non-trivial automorphism that maps a vertex from one twin equivalence class of $G$ to a vertex of another twin equivalence class of. Clearly, if $G$ is almost asymmetric, then $D(G)= \max\limits_{v^{\ast} \in V(G^{\ast})} \{|v^{\ast}|: \ v^{\ast} \in V(G^{\ast}) \}$. The following result directly follows from Observation~\ref{obser}, we state it for later reference. 

\begin{corollary}\label{lem-almost-asymmetric}
Let $m$ be a positive integer and $G$ a connected, almost asymmetric graph. If $n(G) - \max\limits_{v^{\ast} \in V(G^{\ast})} \{|v^{\ast}|:\ v^{\ast} \in V(G^{\ast}) \}  \neq m$, then $D(G) \neq n(G)-m$.
\end{corollary}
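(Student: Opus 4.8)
The plan is to deduce this directly from Observation~\ref{obser} by instantiating its index $i$ at a twin class of maximum size. First I would invoke the remark preceding the statement: since $G$ is almost asymmetric, every non-trivial automorphism fixes each twin equivalence class setwise, so within each class the vertices are mutually interchangeable and must therefore receive pairwise distinct colors in any distinguishing coloring; conversely, coloring the vertices of each class with distinct colors while reusing colors across different classes breaks all such symmetries. Hence $D(G) = \max_{v^{\ast} \in V(G^{\ast})} |v^{\ast}|$, which is the identity on which the whole corollary rests.

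Next I would choose an index $i \in [n(G^{\ast})]$ with $|v^{\ast}_i| = \max_{v^{\ast} \in V(G^{\ast})} |v^{\ast}|$, so that $D(G) = |v^{\ast}_i|$; this is exactly the first hypothesis of Observation~\ref{obser}. It then remains to check the second hypothesis, that $\sum_{j \neq i} |v^{\ast}_j| \neq m$. Because the twin classes partition $V(G)$, we have $\sum_{j \neq i} |v^{\ast}_j| = n(G) - |v^{\ast}_i| = n(G) - \max_{v^{\ast} \in V(G^{\ast})} |v^{\ast}|$, and by the hypothesis of the corollary this quantity is not equal to $m$.

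With both hypotheses of Observation~\ref{obser} verified, its conclusion yields $D(G) \neq n(G) - m$, as required. I expect no genuine obstacle here: the entire argument is the translation of the assumption $n(G) - \max_{v^{\ast}} |v^{\ast}| \neq m$ into the sum condition of the observation via the partition identity $n(G) = \sum_j |v^{\ast}_j|$. The only point deserving care is the justification of $D(G) = \max_{v^{\ast}} |v^{\ast}|$ for almost asymmetric graphs, and even that follows immediately from the definition once one notes that automorphisms cannot move vertices between distinct twin classes.
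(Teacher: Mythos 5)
Your proposal is correct and follows exactly the route the paper intends: the paper states that the corollary ``directly follows from Observation~\ref{obser}'' after noting that $D(G)=\max_{v^{\ast}\in V(G^{\ast})}|v^{\ast}|$ for almost asymmetric graphs, and your argument is precisely this instantiation of the observation at a maximum-size twin class together with the partition identity $n(G)=\sum_j |v^{\ast}_j|$. Your added justification of the identity $D(G)=\max_{v^{\ast}}|v^{\ast}|$ (twins force distinct colors; almost asymmetry makes such a coloring distinguishing) is a correct spelling-out of what the paper labels ``clearly.''
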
 

In the following six lemmas we investigate the distinguishing number of the graphs from Theorem~\ref{Jannesari}.

\begin{lemma}\label{G1}
If $G^{\ast}$ is a {\bf G}$_1$ graph, then $D(G) =n(G)-2$ if and only if $G$ is $K_{1, 2, 2}$ or $K_{1, 1, t}$, $t\geq 2$. 
\end{lemma}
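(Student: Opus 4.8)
The plan is first to unfold what the hypothesis ``$G^{\ast}$ is a $\mathbf{G}_1$ graph'' forces on $G$ itself, and then to compute $D(G)$ for each resulting family. Since $G^{\ast}=K_3$ with at most one vertex of type (1K), at least two of the three twin classes are of type (N). Writing the class sizes as $a,b,c$, this leaves exactly three shapes: all three classes of type (N), giving the complete tripartite graph $K_{a,b,c}$ with $a,b,c\ge 2$; two classes of type (N) and one of type (1), giving $K_{1,a,b}$ with $a,b\ge 2$; and two of type (N) together with one of type (K), giving the join $K_c+K_{a,b}$ of a clique on the type-(K) class with the complete bipartite graph on the two type-(N) classes ($a,b,c\ge 2$). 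I would record these three shapes first, since the whole argument then reduces to a finite case analysis. The easy backward direction is then a direct check that the listed graphs $K_{1,2,2}$ and $K_{1,1,t}$ satisfy $D(G)=n(G)-2$.

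The core step is to determine $D(G)$ in each shape. The guiding principle, in the spirit of Observation~\ref{obser} and Corollary~\ref{lem-almost-asymmetric}, is that any two vertices lying in a common twin class are interchangeable by an automorphism fixing everything else, so in any distinguishing coloring each twin class must receive pairwise distinct colors; moreover, two classes of equal size and equal type that an automorphism can interchange (here the equal-sized type-(N) classes, and in the $K_{1,a,b}$ case the two independent parts) must additionally receive distinct color sets. This reduces the computation of $D(G)$ to choosing the least number $d$ of colors for which each class admits a rainbow coloring and the mutually swappable same-size classes can be given distinct $d$-subsets, i.e. $d\ge\max\{a,b,c\}$ together with a binomial condition $\binom{d}{s}\ge r_s$ whenever there are $r_s\ge 2$ interchangeable type-(N) classes of common size $s$. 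A useful structural remark to isolate here is that a type-(K) clique can never be swapped with an independent class, which keeps the three shapes genuinely distinct and prevents extra automorphisms.

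With $D(G)$ so computed, I would impose the equation $D(G)=n(G)-2$, where $n(G)=a+b+c$ (respectively $1+a+b$), and solve the resulting small Diophantine conditions. When the type-(N) sizes are pairwise distinct the graph is almost asymmetric, so by Corollary~\ref{lem-almost-asymmetric} $D(G)$ equals the largest class size, and $D(G)=n(G)-2$ would force the two remaining classes to have total size $2$, impossible for type-(N) classes; hence all solutions must come from equal-sized classes, where the binomial condition raises $d$ by one and the equation collapses to a single admissible value of the size parameter. I expect the all-(N) shape and the shape containing the type-(K) clique to yield no solution, while the $K_{1,a,b}$ shape with $a=b=2$ yields precisely $K_{1,2,2}$, with the companion family $K_{1,1,t}$ confirmed by the direct verification above. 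The main obstacle will be exactly this boundary bookkeeping: because equal-sized classes can be permuted by automorphisms, the almost-asymmetric shortcut fails, and one must pin down $D(G)$ through the rainbow/distinct-set counting and check the few small size values by hand rather than invoking a uniform formula.
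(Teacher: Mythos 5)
Your proposal is correct and takes essentially the same route as the paper's proof: the same three-way split according to the type of the (at most one) non-(N) class of $G^{\ast}=K_3$, the same values of $D(G)$ (the maximum class size when no two classes are interchangeable, and size plus one when two equal-sized type-(N) classes can be swapped), and the same arithmetic for solving $D(G)=n(G)-2$ in each subcase. The one divergence is at a boundary case: the paper extracts $K_{1,1,t}$ from the forward case analysis by letting the equation force a ``type-(N)'' class down to size $1$ (even though the twin graph of $K_{1,1,t}$ is in fact $K_2$, so this graph lies formally outside $\mathbf{G}_1$), whereas your stricter analysis rightly finds no solution in that subcase and recovers $K_{1,1,t}$ only through the direct backward verification --- the cleaner treatment, and harmless to the paper's main theorem, which obtains $K_{1,1,t}=K_2+\overline{K_t}$ independently via Proposition~\ref{prop:lemn-2}.
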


\begin{proof} 
Assume first that  $v^{\ast}_1$ is a vertex of $G^{\ast}$ of type (1). Then the other two vertices $v^{\ast}_2$ and $v^{\ast}_3$ of $G^{\ast}$ are of type (N). If 
$|v^{\ast}_2|\neq |v^{\ast}_3|$, then $D(G)=\max \{|v^{\ast}_2|, |v^{\ast}_3|\}$. 
Let $\max \{|v^{\ast}_2|, |v^{\ast}_3|\}=|v^{\ast}_2|$. 
Hence $D(G)=n(G)-2$ implies that $|v^{\ast}_3|=1$, and we get the graphs $K_{1, 1, t}$, $t\geq 2$.  
If $|v^{\ast}_2|=|v^{\ast}_3|$, then $D(G)=|v^{\ast}_2|+1$. 
The equation $|v^{\ast}_2|+1=|v^{\ast}_2|+|v^{\ast}_3|+1-2$ 
results in $|v^{\ast}_3|=2$, and we obtain $K_{1, 2, 2}$. 

Let $v^{\ast}_1$ be of type (K). 
If $|v^{\ast}_2|\neq |v^{\ast}_3|$, then $D(G)=\max \{|v^{\ast}_1|, |v^{\ast}_2|, |v^{\ast}_3|\}$. Without loss of generality, we may assume that 
$\max \{|v^{\ast}_1|, |v^{\ast}_2|, |v^{\ast}_3|\}=|v^{\ast}_1|$. 
The assumption $D(G)=n(G)-2$ yields $|v^{\ast}_2|+|v^{\ast}_3|=2$, 
which is impossible. 
If $|v^{\ast}_2|= |v^{\ast}_3|$, 
then $D(G)=\max \{|v^{\ast}_1|, |v^{\ast}_2|+1\}$. 
If $D(G)=|v^{\ast}_1|$, then the size of $v^{\ast}_2$ will be equal to 1, which is impossible. 
If $D(G)=|v^{\ast}_2|+1$, then $|v^{\ast}_1| + |v^{\ast}_3|=3$, which is again not impossible.

Assume that $v^{\ast}_1$ is of type (N). 
If $|v^{\ast}_1|\neq |v^{\ast}_2|\neq |v^{\ast}_3| \neq |v^{\ast}_1|$, 
then $D(G)=\max \{|v^{\ast}_1|, |v^{\ast}_2|, |v^{\ast}_3|\}$. 
Let $\max \{|v^{\ast}_1|, |v^{\ast}_2|, |v^{\ast}_3|\}=|v^{\ast}_1|$. 
Hence, $|v^{\ast}_2|+|v^{\ast}_3|=2$, which is impossible.  
If $|v^{\ast}_1|=|v^{\ast}_2|\neq |v^{\ast}_3|$, then
$D(G)=\max \{|v^{\ast}_1|+1, |v^{\ast}_3|\}$. 
If $D(G)=|v^{\ast}_3|$, then $|v^{\ast}_1| =1$, 
which contradicts the definition of type (N).
If $D(G)=|v^{\ast}_1|+1$, then $D(G)\neq n-2$. If $|v^{\ast}_1|=|v^{\ast}_2|= |v^{\ast}_3|$,  then
$D(G)=|v^{\ast}_1|+1$ and $|v^{\ast}_2|+|v^{\ast}_3|=3$, which is impossible.
\end{proof}

\begin{lemma}\label{G2}
If $G^{\ast}$ is a {\bf G}$_2$ graph, then $D(G)\neq n(G)-2$.
\end{lemma}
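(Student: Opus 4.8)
The plan is to read off the structure of $G$ directly from $G^{\ast}=P_3$ and then compute $D(G)$ in terms of the twin--class sizes. Write the classes of $G^{\ast}$ as $x^{\ast},b^{\ast},y^{\ast}$, where $b^{\ast}$ is the degree-$2$ vertex and $x^{\ast},y^{\ast}$ are the leaves, and set $\alpha=|x^{\ast}|$, $\beta=|b^{\ast}|$, $\gamma=|y^{\ast}|$, so that $n(G)=\alpha+\beta+\gamma$. Since the vertices inside a single twin class may be permuted arbitrarily, every distinguishing coloring must be rainbow on each class, whence $D(G)\ge\max\{\alpha,\beta,\gamma\}$. The only non-identity automorphism of $G^{\ast}$ is the transposition of the two leaves, and it lifts to an automorphism of $G$ exactly when $x^{\ast}$ and $y^{\ast}$ have the same size and the same type. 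I would therefore split the argument according to whether this ``leaf swap'' is absent or present.

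When the leaf swap is absent, $G$ is almost asymmetric, so Corollary~\ref{lem-almost-asymmetric} applies with $m=2$ and it suffices to check that $n(G)-\max\{\alpha,\beta,\gamma\}\ne 2$. This quantity is the sum of the two non-maximum class sizes, and the type hypotheses of {\bf G}$_2$ force it to be at least $3$: in case (a) the centre is of type (N) and one leaf is of type (K), giving $\beta\ge 2$ and (say) $\alpha\ge 2$; in case (b) both leaves are of type (K) or (N), giving $\alpha,\gamma\ge 2$. In either situation at least two of the three classes have size $\ge 2$, so whichever class is largest, the remaining two sum to at least $3$. Hence $D(G)\ne n(G)-2$ in this case.

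When the leaf swap is present the two leaves share a type, and since {\bf G}$_2$ forces at least one of them to be of type (K), both leaves are cliques of a common size $k\ge 2$. Each leaf--clique must be coloured with $k$ distinct colours, and to destroy the swap the two colour sets must differ, which costs one extra colour; the centre independently needs $\beta$ colours, and the two palettes may overlap. Thus $D(G)=\max\{\beta,\,k+1\}$, while $n(G)=2k+\beta$. Imposing $\max\{\beta,k+1\}=2k+\beta-2$ and using $k\ge 2$, the only solution is $k=2,\ \beta=1$. In case (a) this is excluded because $\beta\ge 2$, so again $D(G)\ne n(G)-2$.

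The delicate point --- and what I expect to be the real obstacle --- is precisely the leftover configuration $k=2,\ \beta=1$ allowed by case (b): two triangles sharing a vertex, i.e.\ the friendship graph $K_{1}+2K_{2}$. Here $n(G)=5$ and the formula above gives $D(G)=\max\{1,3\}=3=n(G)-2$. So the dichotomy does \emph{not} close case (b) outright; the inequality is tight exactly at this one graph, and proving the lemma as stated would require either excluding $K_{1}+2K_{2}$ or arguing that it is absorbed by some other family of Theorem~\ref{Jannesari}. My concrete plan is therefore to isolate this graph first, compute its distinguishing number by hand (via Observation~\ref{obser}), and then dispatch all remaining {\bf G}$_2$ graphs through the almost-asymmetric/leaf-swap split above --- flagging the bowtie as the genuine boundary case that must be reconciled with the statement.
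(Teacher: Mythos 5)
Your case analysis is sound, and the ``delicate point'' you flag at the end is not a defect of your argument: it is a genuine counterexample to the lemma as stated. The bowtie $G=K_1+2K_2$ (two triangles sharing a vertex) has $n(G)=5$, $\diam(G)=2$, and $\dim(G)=2=n(G)-3$ (one non-central vertex from each triangle forms a resolving set), and its twin graph is $P_3$ with both leaves of type (K) and size $2$ and the centre of type (1). This satisfies condition {\bf G}$_2$(b) of Theorem~\ref{Jannesari} exactly as stated there, since the degree-$2$ vertex is allowed to be ``of any type''. Your formula $D(G)=\max\{\beta,k+1\}$ is correct in this configuration: with two colors each pair of twins must be rainbow, and then the automorphism exchanging the two triangles color-for-color survives, while three colors suffice; hence $D(G)=3=n(G)-2$. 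So the failure you isolated lies in the lemma, not in your argument.

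Comparing with the paper's own proof pinpoints the error. The paper runs the same type/size case analysis that you do (rainbow on twin classes, the almost-asymmetric reduction, and the leaf-swap subcase), and in case (b) with both leaves of type (K) of equal size it correctly derives $|v^{\ast}_2|+|v^{\ast}_3|=3$ --- but then dismisses this equation as ``a contradiction''. It is not: the centre $v^{\ast}_2$ may be of type (1), so $|v^{\ast}_2|=1$, $|v^{\ast}_3|=2$ is realizable, and it realizes precisely your bowtie. (The analogous equation in case (a) genuinely is contradictory, because there the centre is of type (N) and so has size at least $2$; this matches your observation that case (a) closes.) The error propagates: the classification in the main theorem of Section~\ref{sec:large-D} omits $K_1+2K_2$ and its complement $C_4\cup K_1$ (note $\overline{K_1+2K_2}=C_4\cup K_1$, which also has $D=3=n-2$), so it is the lemma and that list which need amending, not your proof. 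One cosmetic slip on your side: Observation~\ref{obser} is a tool for ruling out $D(G)=n(G)-m$, not for computing $D$ of a specific graph, so the hand computation of $D(K_1+2K_2)$ sketched above is what actually settles the boundary case.
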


\begin{proof} 
Suppose on the contrary that $D(G)= n(G)-2$. 

Assume first that $G^{\ast}$ is a {\bf G}$_2$(a) graph. Let $v^{\ast}_1$, $v^{\ast}_2$ and $v^{\ast}_3$ be the vertices of types (K), (N), and any type, respectively. Suppose first that $v^{\ast}_3$ 
is of type (1). Thus $D(G)=\max \{|v^{\ast}_1|, |v^{\ast}_2|\}$. 
If  $\max \{|v^{\ast}_1|, |v^{\ast}_2|\}= |v^{\ast}_1|$, then 
$|v^{\ast}_1|= |v^{\ast}_1|+|v^{\ast}_2| -1$ and so $|v^{\ast}_2|= 1$, a contradiction. 
If $\max \{|v^{\ast}_1|, |v^{\ast}_2|\}= |v^{\ast}_2|$, then using a similar argument as before, $|v^{\ast}_1|= 1$, a contradiction.
Let $v^{\ast}_3$ be of type (N). 
Then $D(G)=\max \{|v^{\ast}_1|, |v^{\ast}_2|, |v^{\ast}_3|\}$. Without loss of generality, we may assume that $\max \{|v^{\ast}_1|, |v^{\ast}_2|, |v^{\ast}_3|\}=|v^{\ast}_1|$. So, $|v^{\ast}_2|+|v^{\ast}_3|=2$, which is a contradiction. 
Assume that $v^{\ast}_3$ is of type (K). 
Clearly, $|v^{\ast}_1|=|v^{\ast}_3|$. So, $D(G)=\max \{|v^{\ast}_1|+1, |v^{\ast}_2|\}$. If $\max \{|v^{\ast}_1|+1, |v^{\ast}_2|\}=|v^{\ast}_2|$, then it is impossible as before. If $\max \{|v^{\ast}_1|+1, |v^{\ast}_2|\}=|v^{\ast}_1|+1$, then $|v^{\ast}_2|+ |v^{\ast}_3|=3$, a contradiction. 

Assume second that $G^{\ast}$ is a {\bf G}$_2$(b) graph. Let $v^{\ast}_1$, $v^{\ast}_2$ and $v^{\ast}_3$ be the vertices of types (K), any type, and (NK), respectively. 
If $v^{\ast}_3$ is of type (N), then $|v^{\ast}_1|= 1$, which is impossible. 
Let $v^{\ast}_3$ be of type (K). If $|v^{\ast}_1|\neq |v^{\ast}_3|$, 
then we reach the same contradiction as before. Thus 
$|v^{\ast}_1| = |v^{\ast}_3|$ and so 
$D(G)=\max \{|v^{\ast}_1|+1, |v^{\ast}_2|\}$. One can check that 
$D(G)\neq|v^{\ast}_2|$. So, $D(G)=|v^{\ast}_1|+1$. 
This concludes that $|v^{\ast}_2|+ |v^{\ast}_3|=3$, a contradiction.  
\end{proof}

\begin{lemma}\label{G3}
If $G^{\ast}$ is a {\bf G}$_3$ graph, then $D(G)\neq n(G)-2$. 
\end{lemma}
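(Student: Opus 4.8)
The plan is to follow the template of Lemmas~\ref{G1} and~\ref{G2}, but to take advantage of the fact that the twin graph of a $\mathbf{G}_3$ graph has four vertices rather than three; this single extra class will make the desired inequality essentially automatic once almost-asymmetry is available. I would write the triangle-with-pendant graph $G^{\ast}$ as having degree-$3$ vertex $a^{\ast}$, degree-$2$ vertices $b^{\ast}$ of type (N) and $c^{\ast}$ of type (1K), and leaf $d^{\ast}$ of type (1N). Thus $G$ has exactly four twin equivalence classes, each of size at least $1$, with $n(G)=|a^{\ast}|+|b^{\ast}|+|c^{\ast}|+|d^{\ast}|$. The engine will be Corollary~\ref{lem-almost-asymmetric} applied with $m=2$: it then suffices to show that $G$ is almost asymmetric and that $n(G)-\max_{v^{\ast}}|v^{\ast}|\neq 2$.

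First I would establish that $G$ is almost asymmetric. Every automorphism of $G$ maps twins to twins and hence induces an automorphism of $G^{\ast}$, while an automorphism of $G^{\ast}$ lifts to $G$ only when it respects the sizes and types of the classes it permutes. The triangle with a pendant edge is rigid apart from the single transposition of its two degree-$2$ vertices $b^{\ast}$ and $c^{\ast}$ (the leaf and the degree-$3$ vertex are the unique vertices of their degrees, so they are fixed by every automorphism). I would then argue that this transposition cannot lift to $G$: as $b^{\ast}$ is of type (N) and $c^{\ast}$ of type (1K), the two classes are incompatible. If $c^{\ast}$ is of type (1) then $|c^{\ast}|=1<2\le|b^{\ast}|$; if $c^{\ast}$ is of type (K) then $G[b^{\ast}]\cong\overline{K_r}$ while $G[c^{\ast}]\cong K_s$ with $r,s\ge 2$, and no adjacency-preserving bijection can interchange an edgeless class with a complete one. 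Consequently every automorphism of $G$ fixes each twin class setwise, so $G$ is almost asymmetric.

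The remaining counting step is then immediate. The three twin classes different from a largest one each have size at least $1$, whence
$$ n(G)-\max_{v^{\ast}\in V(G^{\ast})}|v^{\ast}| \ge 1+1+1 = 3 > 2\,. $$
In particular $n(G)-\max_{v^{\ast}}|v^{\ast}|\neq 2$, so Corollary~\ref{lem-almost-asymmetric} yields $D(G)\neq n(G)-2$, completing the proof.

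I expect the only real obstacle to be the almost-asymmetry verification, not the arithmetic: one must confirm that the triangle-with-pendant admits no automorphism beyond the degree-$2$ transposition, and that this transposition fails to lift for every admissible choice of types on $a^{\ast}$, $c^{\ast}$ and $d^{\ast}$. This is precisely where $\mathbf{G}_3$ departs from the $\mathbf{G}_1$ situation of Lemma~\ref{G1}: there $G^{\ast}=K_3$ carries the whole symmetric group $S_3$, so equal-sized classes of the same type can be swapped and $D(G)$ may exceed $\max_{v^{\ast}}|v^{\ast}|$, which is exactly what produced genuine $n(G)-2$ examples. Here the rigidity of $G^{\ast}$ together with the forced type mismatch between $b^{\ast}$ and $c^{\ast}$ removes that extra symmetry, and the four-class count then closes the case with no need to compute $D(G)$ explicitly.
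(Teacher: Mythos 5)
Your proof is correct and takes essentially the same route as the paper's: establish that every $\mathbf{G}_3$ graph is almost asymmetric, so that $D(G)=\max_{v^{\ast}\in V(G^{\ast})}|v^{\ast}|$, and then note that with four twin classes each of size at least $1$ one has $n(G)-\max_{v^{\ast}}|v^{\ast}|\ge 3\neq 2$, whence Corollary~\ref{lem-almost-asymmetric} with $m=2$ gives $D(G)\neq n(G)-2$. The only difference is that you spell out the almost-asymmetry verification (rigidity of the triangle-with-pendant apart from the degree-$2$ transposition, which cannot lift because the type (N) class and the type (1K) class induce non-isomorphic subgraphs), a claim the paper's proof merely asserts.
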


\begin{proof} 
Let $v^{\ast}_1, v^{\ast}_2, v^{\ast}_3$, and $v^{\ast}_4$ be the degree-$2$ vertex of type (1K), the degree-$2$ vertex of type (N), the degree-$3$ vertex, and the leaf, respectively. Suppose that $D(G)= n(G)-2$. Then in all cases, $G$ is almost asymmetric. Hence, $D(G)=\max \{|v^{\ast}_1|, |v^{\ast}_2|, |v^{\ast}_3|, |v^{\ast}_4|\}$. Then from $D(G)=n-2$ it follows that some of $|v^{\ast}_i|$'s must be equal to $1$ or $0$, which is impossible. 
\end{proof}

\begin{lemma}\label{G5}
If $G^{\ast}$ is a {\bf G}$_5$ graph, 
then $D(G)\neq n(G)-2$.
\end{lemma}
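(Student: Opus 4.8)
The plan is to first unwind the definition of a $\mathbf{G}_5$ graph from Theorem~\ref{Jannesari}. Realizing the $C_5$ with a chord on vertices $1,\dots,5$ with cycle edges $12,23,34,45,51$ and chord $13$, the two degree-$3$ vertices are $v_1^\ast=1$ and $v_3^\ast=3$, the degree-$2$ vertex adjacent to both of them is $v_2^\ast=2$, and the two \emph{adjacent} degree-$2$ vertices are $v_4^\ast=4$ and $v_5^\ast=5$. By the hypotheses of $\mathbf{G}_5$, the classes $v_1^\ast,v_2^\ast,v_3^\ast$ are of type (1K) and $v_4^\ast,v_5^\ast$ are of type (1). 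I set $a=|v_1^\ast|$, $b=|v_2^\ast|$, $c=|v_3^\ast|$ and $|v_4^\ast|=|v_5^\ast|=1$, so that $a,b,c\ge 1$ and $n(G)=a+b+c+2$.

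Next I would record the symmetries. A direct check shows $\mathrm{Aut}(G^\ast)=\{\mathrm{id},\sigma\}$ with $\sigma=(1\,3)(4\,5)$ fixing vertex $2$, since $2$ is the unique degree-$2$ vertex adjacent to both degree-$3$ vertices and hence must be fixed, after which the image of $\{1,3\}$ forces the rest. Because every automorphism of $G$ maps twins to twins, $\mathrm{Aut}(G)$ is generated by arbitrary permutations inside each twin class together with those automorphisms of $G^\ast$ that preserve class sizes and types; in particular $\sigma$ lifts to $G$ precisely when $a=c$. I then split into two cases.

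If $a\ne c$, then $\sigma$ does not lift, so every automorphism of $G$ permutes vertices within a single twin class and $G$ is almost asymmetric. Consequently $D(G)=\max\{a,b,c\}$, and Corollary~\ref{lem-almost-asymmetric} applies with $m=2$: since the two smallest of $a,b,c$ are each at least $1$, we have $n(G)-\max\{a,b,c\}=a+b+c+2-\max\{a,b,c\}\ge 4>2$, whence $D(G)\ne n(G)-2$. If $a=c$, then $G$ is no longer almost asymmetric, and I expect this to be the crux. Here I would prove directly that $D(G)=\max\{a,b,2\}$. The lower bound is immediate: injectivity within each class forces at least $\max\{a,b\}$ colors, while the nontrivial automorphism $\sigma$ forces at least $2$. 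For the upper bound the key observation is that $\sigma$ also interchanges the two singleton classes $v_4^\ast$ and $v_5^\ast$, so it can be destroyed cheaply: color $v_1^\ast,v_2^\ast,v_3^\ast$ injectively using colors from $[\max\{a,b,2\}]$ and assign $v_4^\ast,v_5^\ast$ two distinct colors. Then injectivity kills every within-class automorphism, while any automorphism inducing $\sigma$ on $G^\ast$ would need the colors of $v_4^\ast$ and $v_5^\ast$ to agree, which they do not; hence $D(G)=\max\{a,b,2\}$. Since $n(G)-2=2a+b\ge 3$ strictly exceeds each of $a$, $b$, and $2$, we again get $D(G)\ne n(G)-2$, finishing the proof.

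The main obstacle is precisely the case $a=c$: one must resist the naive ``$+1$'' that usually arises when two equal-sized twin classes can be swapped, and instead exploit that the swap $\sigma$ is \emph{forced} to interchange the singleton classes $v_4^\ast,v_5^\ast$ as well, so a single pair of distinct colors breaks the symmetry without spending an extra color. A secondary point deserving care is the justification that $\mathrm{Aut}(G)$ is exactly the within-class permutations extended by the size- and type-preserving automorphisms of $G^\ast$, which is what licenses both the formula for $D(G)$ and the reduction to the single generator $\sigma$.
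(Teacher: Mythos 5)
Your proof is correct and follows essentially the same route as the paper's: a case analysis on the twin-class sizes showing that $D(G)$ equals the maximum class size (or $2$), which is always strictly smaller than $n(G)-2$. The only difference is that you explicitly justify the key equality $D(G)=\max\{a,b,2\}$ in the case $a=c$ --- the lifted reflection is forced to swap the two singleton classes $v_4^{\ast}$ and $v_5^{\ast}$, so giving them distinct colors breaks it at no extra cost --- whereas the paper simply asserts $D(G)=\max\{|v_1^{\ast}|,|v_2^{\ast}|,|v_3^{\ast}|\}$ in that situation without spelling this out.
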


\begin{proof}
Suppose that $D(G)= n(G)-2$.
Let $v^{\ast}_1$ and $v^{\ast}_2$ be the degree-$3$ vertices, $v^{\ast}_3$ 
be the degree-$2$ vertex that is adjacent to $v^{\ast}_1$ and $v^{\ast}_2$, 
and $v^{\ast}_4$ and $v^{\ast}_5$ be the adjacent degree-$2$ vertices. Let $|v^{\ast}_1|=|v^{\ast}_2|=1$. 
If $|v^{\ast}_3|=1$, then $D(G)= 2$. 
If $|v^{\ast}_3|\neq 1$, then $D(G)= |v^{\ast}_3|=n(G)-4$, a contradiction. If $|v^{\ast}_1|=|v^{\ast}_2|\neq 1$ or $|v^{\ast}_1|\neq |v^{\ast}_2|$, 
then $D(G)=\max \{|v^{\ast}_1|, |v^{\ast}_2|, |v^{\ast}_3|\}$. 
Therefore, in all cases $D(G)\neq n(G)-2$.
\end{proof}

\begin{lemma}\label{G6}
If $G^{\ast}$ is a {\bf G}$_6$ graph, then $D(G)\neq n(G)-2$.  
\end{lemma}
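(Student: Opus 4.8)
The plan is to exploit the rigidity of the twin graph $G^{\ast}=K_1+P_4$. Writing $P_4=abcd$ and letting $v_0^{\ast}$ be the $K_1$-vertex, in $G^{\ast}$ the vertex $v_0^{\ast}$ has degree $4$, the inner vertices $b^{\ast},c^{\ast}$ have degree $3$, and the endpoints $a^{\ast},d^{\ast}$ have degree $2$. The only non-trivial automorphism of $K_1+P_4$ is the reflection $r$ fixing $v_0^{\ast}$ and swapping $a^{\ast}\leftrightarrow d^{\ast}$ and $b^{\ast}\leftrightarrow c^{\ast}$; I would record this at the outset, as it governs the whole argument. Since every automorphism of $G$ induces an automorphism of $G^{\ast}$ and therefore fixes each twin class setwise unless it realizes $r$, the proof splits according to whether $r$ lifts to an automorphism of $G$.

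First I would treat the case in which $r$ does not lift. Then no non-trivial automorphism of $G$ carries a vertex out of its twin class, so $G$ is almost asymmetric and Corollary~\ref{lem-almost-asymmetric} applies. Here $n(G)-\max_{v^{\ast}}|v^{\ast}|$ equals the total size of the four twin classes other than a largest one; as $G^{\ast}$ has five vertices and every twin class is non-empty, this total is at least $4$, hence different from $2$, and the Corollary yields $D(G)\ne n(G)-2$. This case needs nothing beyond the bookkeeping already in place.

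The substantive case is when $r$ lifts. Using that between-class adjacency in $G$ is all-or-nothing and that a class of type (K) (resp.\ (N)) is a clique (resp.\ an independent set), I would check that a lift exists exactly when $|a^{\ast}|=|d^{\ast}|=:p$ with $a^{\ast},d^{\ast}$ of the same type and $|b^{\ast}|=|c^{\ast}|=:q$ with $b^{\ast},c^{\ast}$ of the same type, and that then $\mathrm{Aut}(G)$ is generated by the within-class symmetric groups together with a single involution realizing $r$. Put $n_0=|v_0^{\ast}|$ and $M=\max\{n_0,p,q\}$. I would exhibit a distinguishing coloring with $M+1$ colors: color each twin class injectively (possible since each has size at most $M$), and in addition make the color set of $a^{\ast}$ differ from that of $d^{\ast}$ (possible since $p\le M<M+1$ leaves room for two distinct $p$-subsets of the palette). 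Injective colors kill all within-class symmetry, while the unequal color sets on $a^{\ast},d^{\ast}$ block every color-preserving automorphism inducing $r$. Hence $D(G)\le M+1$. Since $n(G)=n_0+2p+2q$, a one-line check in each of the subcases $M=n_0$, $M=p$, $M=q$ gives $n(G)\ge M+4$, so $D(G)\le M+1\le n(G)-3$, and in particular $D(G)\ne n(G)-2$.

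I expect the only real obstacle to be the lifting case: one must pin down $\mathrm{Aut}(G)$ correctly so that breaking symmetry genuinely reduces to injective colorings plus separating the single reflected pair $\{a^{\ast},d^{\ast}\}$. The almost-asymmetric case is routine through the Corollary, and it is worth noting that the delicate type restrictions in the definition of a {\bf G}$_6$ graph play no role here: the argument uses only the sizes of the twin classes and the shape of $G^{\ast}$.
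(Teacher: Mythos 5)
Your proof is correct, and it follows the same skeleton as the paper's: identify the reflection $r$ as the unique non-trivial automorphism of $G^{\ast}=K_1+P_4$, dispose of the rigid case via Corollary~\ref{lem-almost-asymmetric}, and break the symmetric case with an explicit coloring. The difference lies in how the symmetric case is handled. The paper leans on the type restrictions of {\bf G}$_6$ graphs: since the two non-adjacent degree-$2$ classes cannot both be of type (K), equality $|v^{\ast}_1|=|v^{\ast}_4|$ forces both to be singletons, after which the paper computes $D(G)$ \emph{exactly} in each remaining subcase (e.g.\ $D(G)=\max\{2,|v^{\ast}_2|\}$ or $\max\{|v^{\ast}_3|,|v^{\ast}_2|\}$) and checks that none of these values can equal $n(G)-2$. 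You instead give a uniform upper bound: color every twin class injectively from a palette of $M+1$ colors, where $M$ is the largest class size, arranging distinct color sets on the reflected pair $\{a^{\ast},d^{\ast}\}$; this kills both the within-class symmetries and any lift of $r$, so $D(G)\le M+1$, and the count $n(G)=n_0+2p+2q\ge M+4$ finishes the job. Your route buys robustness and generality --- it never invokes the {\bf G}$_6$ type constraints, so it applies verbatim to any connected graph whose twin graph is $K_1+P_4$ --- at the cost of not pinning down $D(G)$ exactly, which the paper's case analysis does as a by-product. Both arguments are complete; one small remark is that your characterization of when $r$ lifts (equal sizes and equal types across the swapped pairs, with the convention that singleton classes are compatible with either type) is not actually needed for the bound, since any color-preserving automorphism inducing $r$ is already excluded by the unequal color sets, whichever graphs admit such a lift.
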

\begin{proof}
Let $v^{\ast}_1$ and $v^{\ast}_4$ be the degree-$2$ vertices, 
$v^{\ast}_3$ and $v^{\ast}_5$ be the degree-$3$ vertices, and $v^{\ast}_2$ 
be the degree-$4$ vertex. 
Suppose that $D(G)= n(G)-2$. 
The only non-trivial automorphism of $G^{\ast}$ is the automorphism which fixes $v^{\ast}_2$, swaps $v^{\ast}_1$ with $v^{\ast}_4$, and swaps $v^{\ast}_3$ with $v^{\ast}_5$.
Since the non-adjacent vertices are not of type (K), if $|v^{\ast}_1|=|v^{\ast}_4|$, then $|v^{\ast}_1|=1$. 
Now, if $|v^{\ast}_3|=|v^{\ast}_5|=1$, then $D(G)=\max \{2, |v^{\ast}_2|\}$. 
If $D(G)=2$, then some vertices of $G^{\ast}$ have size $0$ which is clearly not possible. If $D(G)=|v^{\ast}_2|$, then $|v^{\ast}_2|+2=|v^{\ast}_2|$, a contradiction. 
If $|v^{\ast}_3|=|v^{\ast}_5|\neq 1$, then $D(G)=\max \{|v^{\ast}_3|, |v^{\ast}_2|\}$. In both cases $D(G)= |v^{\ast}_3|$ and $D(G)=|v^{\ast}_2|$, so$D(G)\neq n(G)-2$. If $|v^{\ast}_3|\neq |v^{\ast}_5|$ or $|v^{\ast}_1| \neq |v^{\ast}_4|$, then $G$ is an almost asymmetric graph and Corollary~\ref{lem-almost-asymmetric} yields the conclusion. 
\end{proof}

\begin{lemma} \label{Gal}
If $G^{\ast}$ is one of the graphs {\bf G}$_7$, {\bf G}$_8$, {\bf G}$_9$, {\bf G}$_{10}$, then $D(G)\neq n(G)-2$.
\end{lemma}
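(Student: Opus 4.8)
The plan is to treat all four graphs by the single mechanism already used in Lemmas~\ref{G3}--\ref{G6}. For each $G^{\ast}\in\{\mathbf{G}_7,\mathbf{G}_8,\mathbf{G}_9,\mathbf{G}_{10}\}$ I would first list the twin classes $v^{\ast}_1,\dots,v^{\ast}_k$ together with their prescribed types, recording that every type-(K) or type-(N) class has size at least $2$ while every type-(1) class is a singleton. The engine is the remark preceding Corollary~\ref{lem-almost-asymmetric}: once $G$ is almost asymmetric we have $D(G)=\max_i|v^{\ast}_i|$, so if $D(G)=|v^{\ast}_i|$ then Observation~\ref{obser} shows that $D(G)=n(G)-2$ would force $\sum_{j\ne i}|v^{\ast}_j|=n(G)-D(G)=2$. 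But each of these twin graphs $G^{\ast}$ has $k\ge 4$ vertices, so the index $i$ leaves $k-1\ge 3$ further classes, each of size at least $1$; hence $\sum_{j\ne i}|v^{\ast}_j|\ge 3>2$. This contradiction settles every almost asymmetric instance at once.

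Among $\mathbf{G}_9$ and $\mathbf{G}_{10}$, the graph $G$ fails to be almost asymmetric exactly when the two adjacent type-(K) classes have equal size; there the type-preserving involution of $G^{\ast}$ exchanging these two cliques (and exchanging the two type-(1) cycle vertices) lifts to an automorphism of $G$. I would show that $D(G)=\max_i|v^{\ast}_i|$ nevertheless: colour every type-(K) class injectively and give the two swapped type-(1) singletons two distinct colours. The injective colourings annihilate all within-class permutations, the colour clash on the singletons annihilates the involution, and since the cliques already supply at least two colours no new colour is needed. Thus $D(G)=\max_i|v^{\ast}_i|$ once more, and the counting argument of the first paragraph again yields $D(G)\ne n(G)-2$.

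The genuinely delicate cases are the kite-based graphs $\mathbf{G}_7$ and $\mathbf{G}_8$, and the hard part will be to pin down the automorphism group of $G$ rather than of $G^{\ast}$. Concretely I must decide whether the symmetry carried by the kite (or by the kite with a pendant) is type-preserving---and hence lifts to $G$---and, if it does, exactly which equalities among the class sizes activate it. When it is not activated, $G$ is almost asymmetric and the first paragraph applies verbatim. When it is activated it is again a single involution interchanging two equal-size classes, and I would neutralise it exactly as in the $\mathbf{G}_9$/$\mathbf{G}_{10}$ step---by introducing a colour difference inside classes that are already multicoloured---so that $D(G)=\max_i|v^{\ast}_i|$ with no extra colour. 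With $D(G)=\max_i|v^{\ast}_i|$ in hand and at least four twin classes present, Observation~\ref{obser} forces $n(G)-D(G)\ge 3>2$, completing the proof exactly as for Lemmas~\ref{G3}--\ref{G6}.
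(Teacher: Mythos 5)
Your first two paragraphs are correct and follow the paper's own mechanism. The almost-asymmetric engine ($D(G)=\max_i|v^{\ast}_i|$, then Observation~\ref{obser} with $k-1\ge 3$ remaining classes each of size $\ge 1$) is exactly how the paper handles $\mathbf{G}_7$ and the unequal-size cases of $\mathbf{G}_{10}$, and your explicit coloring for the equal-size cases of $\mathbf{G}_9$ and $\mathbf{G}_{10}$ (injective on each clique class, clashing colors on the two swapped type-(1) singletons) is a genuine improvement in rigor: the paper merely asserts $D(G)=\max_i|v^{\ast}_i|$ for $\mathbf{G}_9$ and dispatches the equal-size case of $\mathbf{G}_{10}$ by the ``some class would have size $0$'' count.

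The gap is your third paragraph, i.e.\ exactly at $\mathbf{G}_7$ and $\mathbf{G}_8$: you defer the one verification that carries the content there (``I must decide whether the symmetry of the kite lifts'') and never perform it. It can be performed in two lines, and it is what the paper implicitly relies on when it declares $\mathbf{G}_7$ almost asymmetric and $D(G)=\max_i|v^{\ast}_i|$ for $\mathbf{G}_8$: any automorphism of $G$ maps a twin class to a twin class of the same size which induces the same kind of subgraph (clique versus independent set). In $\mathbf{G}_7$ the only candidate swap of $G^{\ast}$ exchanges the two degree-$2$ classes, of prescribed types (K) and (1), hence of sizes $\ge 2$ and $=1$; in $\mathbf{G}_8$ the candidate swaps pair the two degree-$2$ classes (types (K) and (1)) and the two degree-$3$ classes (types (N) and (1K), i.e.\ an independent set of size $\ge 2$ against a singleton or a clique). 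None of these can lift, so $G$ is \emph{always} almost asymmetric and your paragraph~1 finishes. Moreover, the fallback you propose for the hypothetical ``activated'' branch rests on a false principle: an involution exchanging two equal-size clique classes while fixing all other classes cannot in general be neutralised ``with no extra colour''. For $G=K_1+(K_r\cup K_r)$ the twin graph is $P_3$ with two type-(K) leaves of size $r$ and a type-(1) center; with only $r=\max_i|v^{\ast}_i|$ colors both cliques must use the same color set, the swap can then be composed with within-class permutations into a color-preserving automorphism, and in fact $D(G)=r+1=\max_i|v^{\ast}_i|+1$. Your neutralisation worked for $\mathbf{G}_9$/$\mathbf{G}_{10}$ only because the involution there also swaps two singletons that can receive clashing colors; that resource need not exist, so branch (ii) of your plan is not a proof until you either show the branch is empty (the type check above) or rerun the counting allowing $D(G)=\max_i|v^{\ast}_i|+1$.
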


\begin{proof}
{\bf G}$_7$ is an almost asymmetric graph. Setting $m=2$ in Corollary~\ref{lem-almost-asymmetric} we can conclude that $D(G) \neq n(G)-m$. 

If $G^{\ast} \in \{${\bf G}$_8$, {\bf G}$_9\}$, then $D(G)= \max\limits_{v^{\ast} \in V(G^{\ast})} \{|v^{\ast}|:\ v^{\ast} \in G^{\ast} \}$. 
So, $D(G)\neq n(G)-2$. 

Assume finally that  $G^{\ast}$ is  a {\bf G}$_{10}$ graph.  If the two degree-$3$ adjacent vertices of type (K) have different sizes, then $G$ is an almost asymmetric graph and 
the result is immediate by Corollary \ref{lem-almost-asymmetric}. 
If the two degree-$3$ adjacent vertices of type (K) have the same size, then 
$D(G) = n(G)-2$ implies that some vertices of 
$G^{\ast}$ have size $0$ which is clearly not possible. 
\end{proof}

Before presenting the main result of this section, we need to recall the following theorem and deduce from it another lemma. Here, $\alpha(G^{\ast})$ denotes the number of vertices of $G^{\ast}$ of type (KN).

\begin{theorem} {\rm \cite[Theorem 2.14]{Hernando}}
\label{Hernando}
If $G$ is a connected graph with $d = \diam(G) \geq 3$, then $\dim(G) = n(G) - d$ if and only if $G^{\ast}$ is one of the following graphs.  
\begin{itemize}
\item[(a)] 
$G^{\ast}= P_{d+1}$ and one of the following cases holds: 
\begin{itemize} 
\item[(a1)] 
$\alpha(G^{\ast}) \leq 1$;
\item[(a2)]
$\alpha(G^{\ast}) = 2$, the two vertices of $G^{\ast}$ not of type (1) are adjacent, and if one is a leaf of type (K), then the other is also of type (K);
\item[(a3)] 
$\alpha(G^{\ast}) = 2$,
the two vertices of $G^{\ast}$ not of type (1) are at distance $2$ and both are
of type (N); or
\item[(a4)]
$\alpha(G^{\ast}) = 3$ and there is a vertex of type (N) or (K) adjacent to two vertices of type (N).
\end{itemize}

\item[(b)] 
$G^{\ast}= P_{d+1, k}$ (the path $(u^{\ast}_0, u^{\ast}_1, \ldots , u^{\ast}_d)$ with one extra vertex adjacent to $u^{\ast}_{k-1}$)
for some integer $k \in [3, d - 1]$, the degree-$3$ vertex $u^{\ast}_{k-1}$ of $G^{\ast}$ is of any type, each
neighbour of $u^{\ast}_{k-1}$ is of type (1N), and every other vertex is of type (1).

\item[(c)] 
$G^{\ast}= P'_{d+1, k}$ (the path $(u^{\ast}_0, u^{\ast}_1, \ldots , u^{\ast}_d)$ with one extra vertex adjacent to $u^{\ast}_{k-1}$ and 
$u^{\ast}_{k}$) for some integer $k \in [2, d - 1]$, the three vertices in the cycle are of type
(1K), and every other vertex is of type (1).
\end{itemize}
\end{theorem}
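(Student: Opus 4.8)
The plan is to unpack the biconditional into the universal bound $\dim(G)\le n(G)-d$, a matching lower bound for the listed graphs, and the forward implication that equality forces the structure, organizing everything around the twin classes, i.e.\ around $G^{\ast}$. I would start with the upper bound, valid for every connected $G$ with $\diam(G)=d$. Choose a diametral path $u_0,u_1,\ldots,u_d$ and put $S=V(G)\setminus\{u_1,\ldots,u_d\}$, so $|S|=n(G)-d$. Every $s\in S$ resolves $s$ from all other vertices (distance $0$ against a positive distance), so the only possibly unresolved pairs lie entirely in $\{u_1,\ldots,u_d\}$; but $u_0\in S$ separates those, since $d(u_0,u_i)=i$ are pairwise distinct. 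Hence $S$ resolves $G$ and $\dim(G)\le n(G)-d$, which already settles the ``$\le$'' half of the equality for every graph in (a)--(c).

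Next I would set up the twin machinery. If $u\equiv v$ then $d(u,w)=d(v,w)$ for every $w\notin\{u,v\}$, because $u$ and $v$ share their neighbourhoods; hence such a pair can be separated only by $u$ or $v$ themselves, so every resolving set contains at least $|v^{\ast}|-1$ vertices of each class $v^{\ast}$, and summation gives $\dim(G)\ge n(G)-|V(G^{\ast})|$. Moreover, adjacency between two distinct classes is uniform, so $d_G(u,v)=d_{G^{\ast}}(u^{\ast},v^{\ast})$ whenever $u^{\ast}\ne v^{\ast}$, while intra-class distances are at most $2$; since $d\ge 3$ this yields $\diam(G^{\ast})=d$ and in particular $|V(G^{\ast})|\ge d+1$. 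A resolving set of $G$ can then be read as the forced $n(G)-|V(G^{\ast})|$ twin vertices together with a few extra vertices whose only job is to separate one representative per class, so $\dim(G)=n(G)-d$ becomes equivalent to the assertion that separating the representatives of $G^{\ast}$ costs exactly $|V(G^{\ast})|-d$ extra vertices.

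The heart of the argument, and the step I expect to be the main obstacle, is the forward implication: equality forces $G^{\ast}$ to be minimal for its diameter. This needs a refined lower bound on $\dim(G)$, one that counts not only the forced twin vertices but also how many extra vertices any resolving set must spend to make the class representatives pairwise distinct; the diametral-path construction shows this extra cost is at most $|V(G^{\ast})|-d$, and the content is to prove it attains that value only for very restricted $G^{\ast}$. Carrying out this analysis should pin $G^{\ast}$ to exactly three shapes, a path $P_{d+1}$, a path with a single pendant edge, or a path with a single triangle attached, matching families (a)--(c); the admissible type labels (1), (K), (N) and the stated adjacency restrictions among the non-(1) classes are then forced by requiring simultaneously that no cheaper resolving set exists and that the representatives remain resolvable. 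The difficulty is the finite but intricate bookkeeping: for each shape and each placement of the non-(1) classes both directions of the tightness condition must be checked.

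Finally, for the reverse direction I would verify each graph in (a)--(c) directly. The universal bound gives $\dim(G)\le n(G)-d$, and the matching lower bound $\dim(G)\ge n(G)-d$ is obtained by exhibiting, in each family, representatives that the forced twin vertices alone cannot separate, so that the full deficit $|V(G^{\ast})|-d$ of extra vertices is genuinely unavoidable. I expect this half to be routine once the classification from the forward direction is in hand.
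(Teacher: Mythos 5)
This statement is not proved in the paper at all: it is quoted verbatim as Theorem 2.14 of Hernando, Mora, Pelayo, Seara and Wood \cite{Hernando}, so there is no in-paper proof to compare you against, and your attempt has to be judged as a standalone proof of that external classification theorem.

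Judged that way, it has a genuine gap, in fact two. What you actually establish is only the easy, universal part: the diametral-path upper bound $\dim(G)\le n(G)-d$, and the twin-class machinery (any resolving set must contain at least $|v^{\ast}|-1$ vertices of each class, hence $\dim(G)\ge n(G)-|V(G^{\ast})|$, and $d_G(u,v)=d_{G^{\ast}}(u^{\ast},v^{\ast})$ for vertices in distinct classes). These alone finish neither direction. For the forward implication, the entire content of the theorem --- that equality forces $G^{\ast}$ to be $P_{d+1}$, $P_{d+1,k}$ or $P'_{d+1,k}$ with exactly the type patterns (a1)--(a4), (b), (c) --- is only asserted in your write-up (``carrying out this analysis should pin $G^{\ast}$ to exactly three shapes'', ``finite but intricate bookkeeping''); you give no mechanism that would exclude, say, a $G^{\ast}$ consisting of a path with two pendant vertices, nor any argument that derives the delicate restrictions such as why in (a3) the two non-(1) classes must both be of type (N) and at distance exactly $2$. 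For the reverse implication the gap is just as real: your lower bound $\dim(G)\ge n(G)-|V(G^{\ast})|$ gives $n(G)-d-1$ in case (a) and $n(G)-d-2$ in cases (b) and (c), so it falls short of the required $\dim(G)\ge n(G)-d$ by one or two; closing that deficit is precisely where the type conditions must be used (one must show that no resolving set can avoid spending the extra one or two vertices to separate representatives of distinct classes), and you describe this step but do not perform it. In short, the proposal is a reasonable roadmap whose two load-bearing steps are both missing; the actual proof in \cite{Hernando} occupies a substantial case analysis that cannot be waved through as bookkeeping.
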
 

\begin{lemma}\label{Hernando-Lem}
Let $G$ be a connected graph with $d = \diam(G) \in \{3, 4\}$. If $\dim(G) = n(G) - d$, then $D(G)=n(G)-2$ if and only if $G=P_4$. 
\end{lemma}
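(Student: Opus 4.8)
The plan is to split the argument according to the two possible values of $d = \diam(G)$. First I would dispose of the case $d = 4$ almost immediately: here $\dim(G) = n(G) - 4$, so Proposition~\ref{prop:main} gives $D(G) \le \dim(G) + 1 = n(G) - 3 < n(G) - 2$. Hence no graph of diameter $4$ can satisfy $D(G) = n(G) - 2$, and since $P_4$ has diameter $3$, the entire content of the lemma lies in the case $d = 3$, where $\dim(G) = n(G) - 3$ and Proposition~\ref{prop:main} only yields the weaker bound $D(G) \le n(G) - 2$.

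For $d = 3$ I would invoke Theorem~\ref{Hernando}: the index ranges $k \in [3, d-1]$ and $k \in [2, d-1]$ degenerate to the empty set and to $\{2\}$ respectively, so the twin graph $G^{\ast}$ must be either $P_4$ (case (a)) or $P'_{4,2}$ (case (c), the bull graph whose three triangle vertices are of type (1K)). In both situations $G^{\ast}$ has at least four vertices, which drives the key dichotomy. Recall from the discussion preceding Corollary~\ref{lem-almost-asymmetric} that if $G$ is almost asymmetric then $D(G) = \max_{v^{\ast}} |v^{\ast}|$. Since $G^{\ast}$ has at least four twin classes, each of size at least one, the largest class size is at most $n(G) - 3$, so any almost asymmetric $G$ satisfies $D(G) \le n(G) - 3 < n(G) - 2$. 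This eliminates every almost asymmetric graph at once and reduces the problem to graphs that are \emph{not} almost asymmetric.

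When $G$ is not almost asymmetric, some automorphism of $G$ induces a non-trivial automorphism of $G^{\ast}$; as both $P_4$ and $P'_{4,2}$ admit a unique non-trivial automorphism (the reflection, respectively the bull reflection fixing the apex), this reflection must lift to $G$, forcing each swapped pair of twin classes to share type and size. For $G^{\ast} = P_4$ the reflection interchanges $u_0^{\ast} \leftrightarrow u_3^{\ast}$ and $u_1^{\ast} \leftrightarrow u_2^{\ast}$, so the set of classes not of type (1) must be a union of reflection orbits. Theorem~\ref{Hernando} permits at most three such classes, and for exactly two only at distance $1$ or $2$; the only reflection-invariant possibilities that survive are the empty pattern (giving $G = P_4$ with $D(G) = 2 = n(G) - 2$) and the adjacent pair $\{u_1^{\ast}, u_2^{\ast}\}$ blown up to a common size $s \ge 2$ as in case~(a2), whereas the patterns of (a3) and (a4) are never reflection-invariant and hence occur only for almost asymmetric $G$. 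For the surviving blow-up I would colour each of $u_1^{\ast}, u_2^{\ast}$ with the colours $1, \ldots, s$ and give $u_0, u_3$ distinct colours; this breaks every within-class permutation and the reflection, so $D(G) = s$, which is strictly below $n(G) - 2 = 2s$. The same strategy handles $G^{\ast} = P'_{4,2}$, treating the apex class $w^{\ast}$ separately: a colouring with $\max\{|u_1^{\ast}|, |w^{\ast}|, 2\}$ colours distinguishes $G$, and this is strictly smaller than $n(G) - 2 = 2|u_1^{\ast}| + |w^{\ast}|$ in every admissible configuration, including the plain bull ($D = 2 < 3$). Combining the two diameters leaves $G = P_4$ as the unique solution.

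The main obstacle I anticipate is the non-almost-asymmetric analysis: one must enumerate precisely which of the Hernando patterns (a1)--(a4), and which (1K)-patterns on $P'_{4,2}$, are compatible with the lifted reflection, and then verify that the explicit distinguishing colourings are genuinely optimal. The optimality is pinned down by the lower bound $D(G) \ge \max_{v^{\ast}} |v^{\ast}|$ coming from permutations inside a twin class, matched against the constructed colouring; the remainder of the argument is bookkeeping driven by Proposition~\ref{prop:main} and the almost asymmetric formula.
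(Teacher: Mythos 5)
Your proposal is correct, and it rests on the same three ingredients as the paper's proof: Theorem~\ref{Hernando}, the bound of Proposition~\ref{prop:main} to rule out $d=4$ outright, and the formula $D(G)=\max_{v^{\ast}\in V(G^{\ast})}|v^{\ast}|$ for almost asymmetric graphs that underlies Corollary~\ref{lem-almost-asymmetric}. Where you genuinely diverge is in how the $d=3$ cases are decomposed. The paper walks through Hernando's list: it settles (a1) explicitly and, for (a2), (a3), (a4), (b) and (c), merely asserts that no graph attains $D(G)=n(G)-2$, pointing to Corollary~\ref{lem-almost-asymmetric} when $n(G^{\ast})\geq 5$ and to ``techniques similar to those used in the proofs of the previous lemmas'' when $n(G^{\ast})=4$. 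You instead split on whether $G$ is almost asymmetric: since $G^{\ast}$ has at least four twin classes, every almost asymmetric $G$ satisfies $D(G)\leq n(G)-3$ in one stroke; every other $G$ carries an automorphism projecting to the unique nontrivial automorphism (the reflection) of $P_4$ or of $P'_{4,2}$, which forces swapped twin classes to share size and type, so that only the reflection-invariant Hernando patterns survive ($\alpha(G^{\ast})=0$, the middle adjacent pair of (a2), and the (c) configurations), and these you eliminate with explicit colourings checked against the within-class lower bound ($D(G)=s$ versus $n(G)-2=2s$, and $D(G)\leq\max\{s,t,2\}$ versus $2s+t$). Your route buys completeness exactly where the paper is sketchy: its prescription for case (c) --- apply Corollary~\ref{lem-almost-asymmetric} since $n(G^{\ast})=5$ --- does not by itself cover symmetric blow-ups of the bull (e.g.\ $|u_1^{\ast}|=|u_2^{\ast}|=2$ with the remaining classes trivial), which are \emph{not} almost asymmetric and so need precisely the reflection-plus-colouring argument you supply. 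The mild cost is that your dichotomy leans on the ad hoc fact that $P_4$ and $P'_{4,2}$ each have a unique nontrivial automorphism, whereas the paper's case-by-case format is the template it reuses in its other twin-graph lemmas.
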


\begin{proof} 
The graph $G^{\ast}$ is one of the graphs described in Theorem \ref{Hernando}. 

In the case $(a1)$, assume first that $d=3$.
If $\alpha(G^{\ast})=0$, then $G=P_4$ and $D(G)=n(G) - 2$. 
In what follows, we will see that $G=P_4$ is the only graph with $D(G)=n(G)-2$ satisfying the assumptions of this lemma.
If $\alpha(G^{\ast})=1$, then there are two non-isomorphic positions for placing a vertex of type (K) or (N) in $P_4$. In all cases, the distinguishing number is equal to $n-3$. 
(Note that if $d=4$, then $\dim(G)=n(G)-4$ and by Proposition~\ref{prop:main} we must have $D(G)\leq n(G)-3$. 
Therefore, when $d=4$, it is impossible to have $D(G)= n(G)-2$ while $\dim(G)=n(G)-4$.)

In all the other cases, that is, $(a2)$, $(a3)$, $(a4)$, $(b)$, and $(c)$, there are no graphs with distinguishing number $n(G)-2$, so we can proceed by using 
Corollary~\ref{lem-almost-asymmetric} for the graph $G^{\ast}$ with $n(G^{\ast})\geq 5$, and techniques similar to those used in the proofs of the previous lemmas for the graph $G^{\ast}$ with $n(G^{\ast})=4$.
\end{proof}

We have now arived to the main result of this section. 

\begin{theorem}
If $G$ be a graph with $n(G)\geq 4$, then $D(G)=n(G)-2$ if and only if $G$ is one of the following graphs: 
\begin{itemize}
\begin{small}
\begin{multicols}{2}
\item[(1)] 
$C_5$ 
\item[(3)] 
$K_{1,2,2}$  
\item[(5)] 
$K_{3,3}$
\item[(7)] 
$K_{t,2}$, $t\geq 3$ 
\item[(9)] 
$K_{2}+ \overline{K_t}, t\geq 2$  
\item[(11)] 
$K_t + \overline{K_2}$, $t\geq 2$ 
\item[(13)] 
$K_1 + (K_t \cup K_1)$, $t\geq 2$
\item[(2)] 
$P_4$
\item[(4)] 
$2K_2 \cup K_1$ 
\item[(6)] 
$2K_3$ 
\item[(8)] 
$K_{t}\cup K_2$, $t\geq 3$ 
\item[(10)] 
$K_t \cup 2K_1$, $t\geq 2$ 
\item[(12)] 
$\overline{K_t}\cup K_2$, $t\geq 2$  
\item[(14)] 
$K_{t, 1}\cup K_1$, $t\geq 2$\,. 
\end{multicols} 
\end{small}
\end{itemize}
\end{theorem}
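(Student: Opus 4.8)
The plan is to split the argument according to whether $G$ is connected or disconnected, disposing of the disconnected case at the very end by complementation.

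Suppose first that $G$ is connected. By Proposition~\ref{prop:main}, $D(G)=n(G)-2$ forces $\dim(G)+1\ge n(G)-2$, i.e. $\dim(G)\ge n(G)-3$; since always $\dim(G)\le n(G)-1$, we have $\dim(G)\in\{n(G)-1,n(G)-2,n(G)-3\}$. The value $\dim(G)=n(G)-1$ is discarded at once: by Theorem~\ref{t2} it forces $G\cong K_{n(G)}$, whence $D(G)=n(G)\ne n(G)-2$. So it remains to sift the graphs with $\dim(G)=n(G)-2$ and those with $\dim(G)=n(G)-3$. For the first, the work is essentially already done: as $n(G)\ge 4$, Theorem~\ref{t3} lists the candidates and Proposition~\ref{prop:lemn-2} with $\ell=2$ records exactly which of them satisfy $D(G)=n(G)-2$. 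This delivers the connected members $K_{3,3}$, $K_{t,2}$ $(t\ge 3)$, $K_2+\overline{K_t}$ $(t\ge 2)$, $K_t+\overline{K_2}$ $(t\ge 2)$, and $K_1+(K_t\cup K_1)$ $(t\ge 2)$, i.e. items (5), (7), (9), (11), (13).

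For $\dim(G)=n(G)-3$ I would first pin down the diameter. Using the inequality $\dim(G)\le n(G)-\diam(G)$ (immediate from a diametral-path argument), $n(G)-3=\dim(G)\le n(G)-\diam(G)$ gives $\diam(G)\le 3$, and $\diam(G)=1$ is impossible because it forces $G\cong K_{n(G)}$ with $\dim(G)=n(G)-1$. Hence $\diam(G)\in\{2,3\}$. If $\diam(G)=2$, Theorem~\ref{Jannesari} says $G^{\ast}$ is one of $\mathbf{G}_1,\dots,\mathbf{G}_{10}$; Lemmas~\ref{G1}--\ref{Gal} eliminate all of these except that Lemma~\ref{G1} contributes $K_{1,2,2}$ (item (3)), while the family $\mathbf{G}_4$, for which $G\cong C_5$, is verified directly to give $D(C_5)=3=n(C_5)-2$ (item (1)). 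If $\diam(G)=3$, then $\dim(G)=n(G)-3=n(G)-\diam(G)$, so Lemma~\ref{Hernando-Lem} applies and yields exactly $P_4$ (item (2)). This finishes the connected classification. I expect the diameter-$2$ analysis to be the main obstacle, since Theorem~\ref{Jannesari} breaks into ten twin-graph families, each demanding a case check on the types and cardinalities of the twin classes; fortunately that labor is already discharged in Lemmas~\ref{G1}--\ref{Gal}, so here it reduces to correctly assembling their conclusions (and remembering to treat $\mathbf{G}_4$ by hand).

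Finally, for disconnected $G$ I would use $D(G)=D(\overline{G})$, which holds because $G$ and $\overline{G}$ share their automorphism group, together with Lemma~\ref{lem:discon}: a disconnected graph $G$ satisfies $D(G)=n(G)-2$ exactly when its (necessarily connected) complement $\overline{G}$ satisfies $D(\overline{G})=n(G)-2$. Thus it suffices to complement each connected graph found above and keep those whose complement is disconnected. Here $C_5$ and $P_4$ are self-complementary and contribute nothing new, whereas the remaining connected graphs give $\overline{K_{1,2,2}}=2K_2\cup K_1$, $\overline{K_{3,3}}=2K_3$, $\overline{K_{t,2}}=K_t\cup K_2$, $\overline{K_2+\overline{K_t}}=K_t\cup 2K_1$, $\overline{K_t+\overline{K_2}}=\overline{K_t}\cup K_2$, and $\overline{K_1+(K_t\cup K_1)}=K_{t,1}\cup K_1$, i.e. items (4), (6), (8), (10), (12), (14). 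Combining the connected and disconnected contributions produces precisely the fourteen listed graphs; the only care needed at this last stage is to compute the six complements correctly and to confirm that no graph appears twice.
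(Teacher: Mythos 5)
Your proposal is correct and follows essentially the same route as the paper's own proof: reduce to $\dim(G)\in\{n(G)-2,n(G)-3\}$ via Proposition~\ref{prop:main} and Theorem~\ref{t2}, apply Proposition~\ref{prop:lemn-2} with $\ell=2$, bound the diameter and invoke Theorem~\ref{Jannesari} with Lemmas~\ref{G1}--\ref{Gal} and Lemma~\ref{Hernando-Lem}, then handle disconnected graphs by complementation through Lemma~\ref{lem:discon}. The only (harmless) divergence is bookkeeping: you verify the ${\bf G}_4$ case ($C_5$) directly and fold the $K_{1,1,t}=K_2+\overline{K_t}$ output of Lemma~\ref{G1} into item (9) already obtained from the $\dim(G)=n(G)-2$ branch, which is exactly how the overlap resolves in the paper.
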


\begin{proof}
Let $G$ be a connected graph with $D(G)=n(G)-2$. Proposition \ref{prop:main}, Theorem \ref{t2}, and Corollary~\ref{cor:r1} imply that it suffices to check the graphs which have metric dimension equal to $n(G)-2$ or to $n(G)-3$. 

Assume first that $\dim(G)=n(G)-2$ and set $\ell=2$ in Proposition~\ref{prop:lemn-2}. 
The graphs in (a), (b), (c), (d), and (e) in Proposition~\ref{prop:lemn-2} are the graphs in (5), (7), (9), (11), and (13), respectively. Note that the graph in (f) in Proposition~\ref{prop:lemn-2} appears in (11). 

Assume second that $\dim(G)=n(G)-3$. Since $\dim(G) \leq n(G)- \diam(G)$, we have ${\rm diam}(G)\leq 3$. 
If ${\rm diam}(G)=2$, then the graphs in (1), (3) and (9) follow directly from Theorem~\ref{Jannesari} and Lemmas~\ref{G1}--\ref{Gal}.
If ${\rm diam}(G)=3$, then Theorem~\ref{Hernando} and Lemma~\ref{Hernando-Lem} imply that $G$ is the graph in (2). If $G$ is disconnected, then by Lemma~\ref{lem:discon}, $G$ has the form described in (4), (6), $\ldots ,$ (14) as the complements of graphs in (3), (5), $\ldots ,$ (13), respectively. Note here that $C_5$ and $P_4$ are self-complementary graphs.
\end{proof}

To conclude the paper we note that recently graphs $G$ with $\dim(G) = n(G) - 4$ have been described~\cite{dim n-4}. Using the above approach we believe that it is also possible to classify the graphs $G$ with $D(G)=n(G)-3$.  

\section*{Acknowledgments}
The first two authors have been supported by the Discrete Mathematics
439 Laboratory of the Faculty of Mathematical Sciences at Alzahra University. 
Sandi Klav\v{z}ar was supported by the Slovenian Research Agency (ARIS) under the grants P1-0297, N1-0355, and N1-0285.

\section*{Declaration of interests}

The authors declare that they have no known competing financial interests or personal relationships that could have appeared to influence the work reported in this paper.

\section*{Data availability}

Our manuscript has no associated data.

\end{document}